\documentclass[12pt]{amsart}

\usepackage{amsmath}
\usepackage{amscd}
\usepackage{amssymb}
\usepackage{amsfonts}
\usepackage{hyperref}

\newtheorem{theorem}{Theorem}[section]
\newtheorem{lemma}[theorem]{Lemma}
\newtheorem{corollary}[theorem]{Corollary}
\newtheorem{proposition}[theorem]{Proposition}

\theoremstyle{definition}

\theoremstyle{remark}
\newtheorem{remark}[theorem]{Remark}
\numberwithin{equation}{section}

\begin{document}

\title[K\"ahler--Ricci solitons with CSC]
{Rigidity of K\"ahler--Ricci Solitons with Constant Scalar Curvature}
\author[M. Hor\'acio]{Matheus  Hor\'acio}
\address{Departamento de Matem\'atica, Universidade de Bras\'ilia,
70910-900 Bras\'ilia, DF, Brazil}
\email{matheus.andrade5488@gmail.com}
\thanks{M. Hor\'acio was financed by the Coordena\c{c}\~ao de
Aperfei\c{c}oamento de Pessoal de N\'ivel Superior -- Brasil (CAPES) --
Finance Code 001.}
\author[W. Li]{Wenqi Li}
\address{School of Mathematical Sciences, Key Laboratory of MEA (Ministry of
Education) and Shanghai Key Laboratory of PMMP, East China Normal University,
Shanghai 200241, China}
\email{51255500054@stu.ecnu.edu.cn}

\author[J. Ou]{Jianyu Ou}
\address{School of Mathematical Sciences, Xiamen University,
Xiamen 361005, China}
\email{oujianyu@xmu.edu.cn}
\thanks{Jianyu Ou is partially supported by NSFC grant No.12401074, 12371061,
12371081, and Fundamental Research Funds for the Central Universities no.
20720250006.}

\author[G. Wu]{Guoqiang Wu}
\address{School of Science, Zhejiang Sci-Tech University,
Hangzhou 310018, China}
\email{gqwu@zstu.edu.cn}

\author[D. Zhou]{Detang Zhou}
\address{Instituto de Matem\'atica e Estat\'istica,
Universidade Federal Fluminense, S\~ao Domingos,
Niter\'oi, RJ 24210-201, Brazil}
\email{zhoud@id.uff.br}
\thanks{D. Zhou was partially supported by CNPq/Brazil 308067/2023-1.}

\subjclass[2020]{Primary 53C25; Secondary 53C20, 53C21, 53E20.}
\keywords{K\"ahler--Ricci soliton, gradient Ricci soliton, rigidity,
constant scalar curvature, Ricci pinching, partial Codazzi symmetry.}
\date{}

\begin{abstract}
Let $(M^{2m},g,f,J)$ be a complete nonsteady gradient
K\"ahler--Ricci soliton satisfying
\(
\operatorname{Ric}+\nabla^2f=\lambda g,
\qquad \lambda\neq0.
\)
We prove that constant scalar curvature forces the soliton to be rigid.
More precisely, the universal cover splits holomorphically and
isometrically as
\(
N^{2k}\times\mathbb C^{m-k},
\)
where $N^{2k}$ is K\"ahler--Einstein with
$\operatorname{Ric}_{g_N}=\lambda g_N$. In the shrinking case the
quotient is trivial; in the normalization $\lambda=\frac12$ one has
$R\equiv k$.

The K\"ahler result rests on a Riemannian rigidity criterion. On a
complete nonsteady gradient Ricci soliton, if
$\mathcal L_{\nabla f}\operatorname{Ric}$ is nonnegative or nonpositive
everywhere, then it vanishes and the soliton is rigid; no assumption on
the scalar curvature is needed. Along the Ricci flow generated by the
soliton, this means that a Ricci tensor that is monotone in time is
constant in time, and that this forces rigidity.

We also give a direct proof that the pinching
\(
0\leq\operatorname{Ric}\leq\lambda g
\)
forces constant scalar curvature and radial flatness, yielding the
rigidity conclusion through the Petersen--Wylie characterization. The
proof combines a weighted cutoff argument with a partial Codazzi
symmetry for the Ricci endomorphism.
\end{abstract}

\maketitle

\section{Introduction}

A gradient Ricci soliton is a triple $(M^n,g,f)$ satisfying
\[
\operatorname{Ric}+\nabla^2f=\lambda g
\]
for some constant $\lambda$. It is shrinking, steady, or expanding
according as $\lambda>0$, $\lambda=0$, or $\lambda<0$. A gradient
K\"ahler--Ricci soliton is a gradient Ricci soliton
$(M^{2m},g,f,J)$ for which $(M,g,J)$ is K\"ahler and $\nabla f$ is a
holomorphic vector field.
Throughout the paper, all manifolds are assumed to be connected unless otherwise stated.

Following Petersen and Wylie \cite{Petersen-Wylie2}, a complete
nonsteady gradient Ricci soliton is called \emph{rigid} if it is a flat
vector bundle whose universal cover is a product
\[
N\times\mathbb R^\ell,
\]
where $N$ is Einstein with Einstein constant $\lambda$, and the
potential is Gaussian on the Euclidean factor. Petersen and Wylie also
proved that rigidity is characterized by constant scalar curvature and
radial flatness \cite[Theorem~1.2]{Petersen-Wylie2}.

For complete shrinkers, Cao conjectured that constant scalar curvature
alone should imply rigidity. Rigidity under additional curvature
hypotheses was obtained in several works; see, for example,
Fern\'{a}ndez-L\'{o}pez--Garc\'{i}a-R\'{i}o \cite{FR10,FR16},
Munteanu--Sesum \cite{Munteanu-Sesum}, and
Catino--Mastrolia--Monticelli \cite{CMM17}. Cheng and Zhou
\cite{Cheng-Zhou} proved the conjecture in dimension four, and further
progress appears in \cite{Ou-Qu-Wu,Li-Ou-Qu-Wu,Li-Ou-Qu-Wu3}.

In the K\"ahler setting, Fern\'{a}ndez-L\'{o}pez and
Garc\'{i}a-R\'{i}o \cite{FR16} obtained low-dimensional rigidity
results under constant scalar curvature. Chen and Zhu \cite{Chen-Zhu}
proved rigidity under the harmonic Bochner tensor condition, and Zhang
\cite{Zhang2022} obtained higher-dimensional results under additional
divergence assumptions. Complete K\"ahler--Ricci shrinker surfaces have
also been classified through the work of Conlon--Deruelle--Sun
\cite{CDS}, Bamler--Cifarelli--Conlon--Deruelle \cite{BCCD}, and
Li--Wang \cite{Li-Wang}.Recently, Sun-Zhang \cite{Sun-Zhang2024} proved that
all complete gradient K\"ahler Ricci shrinkers are quasi-projective.

Our first main result is the following.

\begin{theorem}\label{mainthm}
Let $(M^{2m},g,f,J)$ be a complete connected nonsteady gradient
K\"ahler--Ricci soliton satisfying
\[
\operatorname{Ric}+\nabla^2f=\lambda g,
\qquad \lambda\neq0.
\]
Assume that the scalar curvature $R$ is constant. Then there exists
$k\in\{0,\ldots,m\}$ such that
\[
R=2k\lambda,
\]
and the universal cover is holomorphically and isometrically a product
\[
(\widetilde M,\widetilde g,\widetilde J)
\cong
\bigl(N^{2k}\times\mathbb C^{m-k},
 g_N+g_{\mathrm{Euc}},J_N\oplus J_0\bigr),
\]
where $(N^{2k},g_N,J_N)$ is a complete simply connected
K\"ahler--Einstein manifold satisfying
\[
\operatorname{Ric}_{g_N}=\lambda g_N.
\]
Consequently, $M$ is a quotient of this product by a discrete group of
holomorphic isometries acting freely and properly discontinuously.
If $\lambda>0$, then the quotient is trivial. In particular, under the
normalization $\lambda=\frac12$,
\[
R=k,
\qquad
M^{2m}\cong N^{2k}\times\mathbb C^{m-k}.
\]
\end{theorem}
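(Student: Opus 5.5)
The plan is to deduce the theorem from the Riemannian rigidity criterion stated in the abstract: on a complete nonsteady gradient Ricci soliton, a one-signed $\mathcal L_{\nabla f}\operatorname{Ric}$ must vanish, and the soliton is then rigid. I will show that in the K\"ahler case constant scalar curvature gives $\mathcal L_{\nabla f}\operatorname{Ric}\equiv0$ identically. After that, only the holomorphic refinement of the splitting and the analysis of the quotient remain.

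\emph{Step 1: $\mathcal L_{\nabla f}\operatorname{Ric}=0$.} I start from the soliton identity $\nabla R=2\operatorname{Ric}(\nabla f)$. Since $R$ is constant, $\operatorname{Ric}(\nabla f)=0$. On a K\"ahler manifold $\operatorname{Ric}$ commutes with $J$, so $\operatorname{Ric}(J\nabla f)=J\operatorname{Ric}(\nabla f)=0$. Let $\rho=\operatorname{Ric}(J\cdot,\cdot)$ be the Ricci form. Then $\iota_{\nabla f}\rho=0$, and $\rho$ is closed, so Cartan's formula gives
\[
\mathcal L_{\nabla f}\rho=d\,\iota_{\nabla f}\rho+\iota_{\nabla f}d\rho=0 .
\]
Because $\nabla f$ is holomorphic, $\mathcal L_{\nabla f}J=0$. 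Hence $\mathcal L_{\nabla f}\operatorname{Ric}=0$, since $\operatorname{Ric}(X,Y)=\rho(X,JY)$ up to a sign convention. In particular $\mathcal L_{\nabla f}\operatorname{Ric}$ is trivially both nonnegative and nonpositive, and the criterion applies: $(M,g,f)$ is rigid. The universal cover is $N\times\mathbb R^\ell$ with $\operatorname{Ric}_N=\lambda g_N$, and $f$ is Gaussian on the flat factor.

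\emph{Step 2: holomorphic splitting and the value of $R$.} On the rigid model, $\operatorname{Ric}$ equals $\lambda$ on $TN$ and $0$ on $T\mathbb R^\ell$, so $\ker\operatorname{Ric}$ is exactly the Euclidean distribution. Since $\operatorname{Ric}$ commutes with $\widetilde J$, both $\ker\operatorname{Ric}$ and its orthogonal complement are $\widetilde J$-invariant parallel distributions. Because $\widetilde J$ is parallel, the de Rham decomposition is then holomorphic. Thus $\ell=2(m-k)$, the flat factor is $\mathbb C^{m-k}$ with a parallel, hence constant, complex structure $J_0$, and $N^{2k}$ is simply connected K\"ahler--Einstein with $\operatorname{Ric}_{g_N}=\lambda g_N$. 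Completeness of $N$ follows from completeness of the product. Taking the trace gives $R=2k\lambda$, and the deck group acts by holomorphic isometries since it preserves $\widetilde g$ and $\widetilde J$.

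\emph{Step 3: triviality of the quotient when $\lambda>0$.} This is the step I expect to need the most care. By Myers' theorem $N$ is compact; it is Fano, so its Todd genus is $1$. Each deck transformation $\gamma$ preserves $\widetilde g$ and $\nabla\widetilde f$. Since $\nabla\widetilde f$ is the radial field $\lambda x$ on the Euclidean factor and vanishes on $N\times\{0\}$, $\gamma$ preserves that critical set and splits as $\gamma=(\gamma_N,A)$. Here $A$ is a linear unitary map of $\mathbb C^{m-k}$ fixing $0$ and $\gamma_N$ is a holomorphic isometry of $N$. The holomorphic Lefschetz fixed point theorem says $\gamma_N$ has Lefschetz number $\sum_q(-1)^q\operatorname{tr}(\gamma_N^*|H^{0,q})=1\neq0$, because $H^{0,q}(N)=0$ for $q>0$. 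So $\gamma_N$ has a fixed point $p$, and then $(p,0)$ is fixed by $\gamma$. Freeness of the deck action forces $\gamma=\mathrm{id}$, so $M\cong N^{2k}\times\mathbb C^{m-k}$, and with $\lambda=\tfrac12$ we get $R=k$.

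If the Lefschetz argument is unavailable in the form needed, I would instead check directly that the Petersen--Wylie flat bundle structure is trivial for shrinkers. The point is that $f$ descends to a proper function on $M$ with a compact minimum set. The holonomy of the flat bundle therefore fixes the zero section, and $\pi_1(M)=\pi_1(N)=1$.
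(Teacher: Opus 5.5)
Your Steps 1 and 2 match the paper's proof. The paper proves the general identity $\mathcal L_V\rho=\frac12\,\mathrm d\mathrm d^cR$ (Proposition~\ref{prop:kahler-lie}) and then specializes to constant $R$. You instead observe directly that $\iota_V\rho=0$ once $A(V)=0$. The mechanism is identical: closedness of $\rho$, Cartan's formula, and $\mathcal L_VJ=0$. The holomorphic splitting via the $J$-invariant parallel eigendistributions of the Ricci endomorphism is also the same.

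The genuine difference is in Step 3.
\begin{itemize}
\item The paper works on $M$ itself. $M$ is a flat vector bundle over its zero section $\Sigma=N/\Gamma$, which is compact K\"ahler--Einstein with positive Ricci curvature. Kobayashi's theorem makes $\Sigma$ simply connected, so the flat bundle is trivial.
\item You work on the universal cover. Each deck transformation preserves $\widetilde f$ and the Ricci eigendistributions, so it splits as $(\gamma_N,A)$ with $A$ linear unitary. The holomorphic Lefschetz theorem, using $H^{0,q}(N)=0$ for $q>0$, then gives a fixed point of $\gamma_N$ and hence of $\gamma$.
\end{itemize}
Your argument is valid. It is in effect a reproof of the case of Kobayashi's theorem you need, since both rest on the vanishing of $H^{0,q}$ on Fano manifolds. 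The paper's route is shorter: it cites Kobayashi as a black box and never has to check that deck transformations split. Yours avoids the flat-bundle description of $M$ altogether.

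Your fallback paragraph does not work as written. A proper potential with compact minimum set does not force $M$ to be simply connected. For example, the quotient of $S^2\times\mathbb R$ by $(p,x)\mapsto(-p,-x)$ is a rigid shrinker with fundamental group $\mathbb Z_2$. In general $M$ retracts onto $\Sigma=N/\Gamma$, not onto $N$. So $\pi_1(M)=\pi_1(\Sigma)$, and writing $\pi_1(M)=\pi_1(N)$ assumes what you want to prove. The K\"ahler input (Kobayashi's theorem, or your Lefschetz argument) is exactly what closes this step, so the fallback is not an independent alternative.
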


Since every complete steady gradient Ricci soliton with constant scalar
curvature is Ricci-flat, combining this fact with Theorem~\ref{mainthm}
yields the following.

\begin{corollary}\label{cor:csc-rigid}
Let $(M^{2m},g,f,J)$ be a complete connected gradient K\"ahler--Ricci
soliton with constant scalar curvature. Then the soliton is rigid.
\end{corollary}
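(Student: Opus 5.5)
The corollary splits into the nonsteady and steady cases, each settled by an ingredient already at hand.

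\emph{Nonsteady case, $\lambda\neq0$.} Theorem~\ref{mainthm} applies directly. It shows that the universal cover is $N^{2k}\times\mathbb C^{m-k}$ with $N$ Kähler--Einstein, $\operatorname{Ric}_{g_N}=\lambda g_N$, and that $M$ is the quotient by a discrete group $\Gamma$ of holomorphic isometries. It remains to check that this is rigid in the sense of Petersen--Wylie. The Ricci tensor of the product is $\lambda g_N\oplus 0$, so the lifted potential $\tilde f$ satisfies $\nabla^2\tilde f=\lambda g-\operatorname{Ric}=0\oplus\lambda g_{\mathrm{Euc}}$. Hence $\tilde f$ is constant along $N$, up to an affine function in the Euclidean directions. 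Since the Hessian is nondegenerate on the Euclidean factor, we can complete the square to get $\tilde f=\frac{\lambda}{2}|z-z_0|^2+c$, which is Gaussian.

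The lifted potential $\tilde f$ is $\Gamma$-invariant. The isometries in $\Gamma$ preserve the de Rham splitting, at least up to permuting flat pieces; since $N$ is Einstein with $\lambda\neq0$ and so has no flat factor, each element acts as $(\phi,A)$ with $A$ a Euclidean isometry. Invariance of the Gaussian forces $A$ to fix $z_0$, so $A$ is linear about $z_0$. Therefore $M=(N\times\mathbb C^{m-k})/\Gamma$ is a flat vector bundle over $N/\Gamma$, which is exactly the rigidity definition. Alternatively, one can verify constant scalar curvature plus radial flatness, $R(\cdot,\nabla f)\nabla f=0$, on the product (obvious, since $\nabla\tilde f$ lies in the flat factor) and invoke \cite[Theorem~1.2]{Petersen-Wylie2}. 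This second route avoids the group-action discussion, so I would present it as the main argument.

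\emph{Steady case, $\lambda=0$.} I would use the standard identity $\Delta R-\langle\nabla f,\nabla R\rangle=-2|\operatorname{Ric}|^2$, which follows from $\operatorname{Ric}+\nabla^2 f=0$ via the contracted Bianchi identity. If $R$ is constant, this gives $\operatorname{Ric}\equiv0$, so $\nabla^2 f=0$. Thus $\nabla f$ is parallel: either $f$ is constant, making $M$ Ricci-flat and trivially rigid (Einstein with $\ell=0$), or the universal cover splits off a line. In both cases the soliton is rigid in the steady sense, namely Ricci-flat with linear potential on the flat factor.

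I expect the main obstacle to be one of convention rather than substance. The Petersen--Wylie definition is stated for nonsteady solitons, so in the steady case one must interpret rigidity as Ricci-flat with parallel $\nabla f$, and I would make that interpretation explicit.
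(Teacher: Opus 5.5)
Your proposal is correct and follows the paper's own argument. The paper also splits off the steady case, where $\Delta_fR=-2|\operatorname{Ric}|^2$ forces Ricci-flatness, and handles the nonsteady case by Theorem~\ref{mainthm}. Your extra verification of radial flatness on the product is valid but not needed, since the proof of Theorem~\ref{mainthm} already obtains rigidity of $M$ directly from Theorem~\ref{thm:riemannian-rigidity}.
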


The proof of Theorem~\ref{mainthm} is based on the identity
\[
\mathcal L_{\nabla f}\operatorname{Ric}=0,
\]
which follows in the K\"ahler setting from constant scalar curvature,
the closedness of the Ricci form, and the real holomorphicity of
$\nabla f$. A purely Riemannian argument then yields
$A^2=\lambda A$ for the Ricci endomorphism $A$ and proves radial flatness directly.
This Riemannian step holds in greater generality.

\begin{theorem}\label{thm:criterion-main}
Let $(M^n,g,f)$ be a complete nonsteady gradient Ricci soliton
satisfying
\[
\operatorname{Ric}+\nabla^2f=\lambda g,
\qquad \lambda\neq0.
\]
If
\[
\mathcal L_{\nabla f}\operatorname{Ric}\geq0
\quad\text{on }M
\qquad\text{or}\qquad
\mathcal L_{\nabla f}\operatorname{Ric}\leq0
\quad\text{on }M,
\]
then $\mathcal L_{\nabla f}\operatorname{Ric}=0$ the scalar curvature
is constant, and the soliton is rigid.
\end{theorem}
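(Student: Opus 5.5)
The starting point is a trace identity. On any gradient Ricci soliton
\[
\mathcal L_{\nabla f}\operatorname{Ric}=\nabla_{\nabla f}\operatorname{Ric}+2\operatorname{Ric}\circ\nabla^2 f
=\nabla_{\nabla f}\operatorname{Ric}+2\lambda\operatorname{Ric}-2\operatorname{Ric}^2 ,
\]
where $\operatorname{Ric}^2$ is the square of the Ricci endomorphism $A$. Taking the trace gives $\operatorname{tr}\mathcal L_{\nabla f}\operatorname{Ric}=\langle\nabla f,\nabla R\rangle+2\lambda R-2|\operatorname{Ric}|^2$. The standard soliton identity $\Delta_f R=\Delta R-\langle\nabla f,\nabla R\rangle=2\lambda R-2|\operatorname{Ric}|^2$ then yields
\[
\operatorname{tr}_g\mathcal L_{\nabla f}\operatorname{Ric}=\Delta R .
\]
Hence the hypothesis $\mathcal L_{\nabla f}\operatorname{Ric}\ge0$ (resp. $\le0$) makes $R$ subharmonic (resp. superharmonic). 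The first goal is to upgrade this to $\Delta R\equiv0$. Since a semidefinite symmetric tensor with zero trace vanishes, that upgrade immediately gives $\mathcal L_{\nabla f}\operatorname{Ric}=0$.

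For the Liouville step I would run a weighted cutoff argument. I would take $\phi=\psi(f/r^2)$ in the shrinking case, or a cutoff in the distance in the expanding case, and use the weight $e^{-f}$ (resp. $e^{f}$). The inputs are the known asymptotics: quadratic growth of $f$ and polynomial volume growth for shrinkers, and $|\nabla f|^2+R=2\lambda f$ in both cases. Writing $\Delta R=\Delta_f R+\langle\nabla f,\nabla R\rangle$ and $\nabla R=2\operatorname{Ric}(\nabla f)$, I would integrate $\phi\,\Delta R$ (and $\phi\,R\,\Delta R$) against the weight and integrate by parts. The aim is to show that the boundary terms are controlled by $\int|\nabla\phi|\,|\nabla R|$, and that these decay because the sign of $\Delta R$ plus the bound $|\operatorname{Ric}(\nabla f)|\le|\operatorname{Ric}||\nabla f|$ give enough integrability. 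I would also use the pointwise identity $\mathcal L_{\nabla f}\operatorname{Ric}(\nabla f,\nabla f)=\nabla f\bigl(\operatorname{Ric}(\nabla f,\nabla f)\bigr)-2\operatorname{Ric}(\nabla_{\nabla f}\nabla f,\nabla f)$ to get a monotone quantity along the flow lines of $\nabla f$. \textbf{This is the step I expect to be the main obstacle}, since no a priori bound on $R$ or $\operatorname{Ric}$ is assumed. If the direct cutoff fails, I would instead use the dynamical picture: $g(t)=\tau(t)\varphi_t^*g$ and $\partial_t\operatorname{Ric}(g(t))=\varphi_t^*\mathcal L_{\nabla f}\operatorname{Ric}/\tau$. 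Monotonicity of $\operatorname{Ric}(g(t))$ in $t$, compared with the fixed scaling of the flow and the behaviour of eigenvalues of $A$ along integral curves of $\nabla f$ toward the minimum set of $f$, should force equality.

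Once $\mathcal L_{\nabla f}\operatorname{Ric}=0$, the eigenvalues of $A$ are constant along the flow of $\nabla f$. Indeed, $\nabla_{\nabla f}A=2A^2-2\lambda A$ as an endomorphism, so each eigenvalue $\mu$ satisfies the Riccati-type equation $\nabla f(\mu)=2\mu(\mu-\lambda)$ along integral curves. Following a curve backward to the critical set of $f$ for a shrinker (or forward for an expander), where $\nabla f(\mu)=0$, forces $\mu\in\{0,\lambda\}$; otherwise $\mu$ blows up in finite parameter time, which contradicts smoothness on a complete manifold. This gives $A^2=\lambda A$. Then $R=\operatorname{tr}A=\lambda\cdot\operatorname{rank}A$ is locally constant, hence constant by connectedness. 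Next, $\nabla R=2\operatorname{Ric}(\nabla f)=0$ shows that $\nabla f$ lies in $\ker A$. Finally, differentiating $A^2=\lambda A$ and using the contracted Bianchi/soliton identity $\operatorname{div}\operatorname{Rm}(\cdot,\cdot,\cdot,\nabla f)=\ldots$, I would show $R(\cdot,\nabla f)\nabla f=0$, that is, radial flatness. The Petersen--Wylie characterization \cite[Theorem~1.2]{Petersen-Wylie2} then gives rigidity.
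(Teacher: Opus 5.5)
There is a genuine gap, and it sits at the step you flag yourself: nothing in the proposal turns the sign of $T=\mathcal L_{\nabla f}\operatorname{Ric}$ into $T=0$. Subharmonicity of $R$ alone gives no Liouville theorem here. Your cutoff sketch has no term that the sign controls: without bounds on $|\operatorname{Ric}|$, the boundary terms $\int|\nabla\phi|\,|\nabla R|$ are not known to decay. The ``dynamical'' fallback is only a hope.

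The missing idea is to work on the integral curves $\gamma$ of $V=\nabla f$, which are defined on all of $\mathbb R$. Since $[V,V]=0$ and $\operatorname{Ric}(V,\cdot)=\tfrac12\,\mathrm dR$, one has $T(V,V)=V(\operatorname{Ric}(V,V))=\tfrac12V(V(R))$, so $(R\circ\gamma)''=2T(V,V)\circ\gamma$. The identity you wrote for $T(V,V)$ carries a spurious term $-2\operatorname{Ric}(\nabla_VV,V)$; that is the formula for $(\nabla_V\operatorname{Ric})(V,V)$. Thus $R\circ\gamma$ is concave if $T\le0$ and convex if $T\ge0$.

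In the concave case, the lower bound on $R$ for complete solitons makes $R\circ\gamma$ constant. In the convex case you need an upper bound, which comes from a Riccati inequality. For a parallel unit field $U$ along $\gamma$, $a=\langle AU,U\rangle$ satisfies $a'=T(U,U)+2|AU|^2-2\lambda a\ge2a(a-\lambda)$. So $a>\max\{0,\lambda\}$ anywhere would blow up in finite forward time. Hence $\operatorname{Ric}\le\max\{0,\lambda\}g$ and $R$ is bounded above.

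Either way $V(R)=0$. You still need a separate argument that this forces $R$ to be constant. The paper's argument: $|V|(\gamma(t))=e^{\lambda t}|V|(p)$, so each flow line converges in one time direction to a zero of $V$, which is a critical point of $R$; Sard's theorem then finishes. Only then does $\operatorname{tr}T=\Delta R=0$, together with semidefiniteness, give $T=0$.

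Your second step also fails as written. First, $\Phi_t^*\operatorname{Ric}=\operatorname{Ric}$ does not make the eigenvalues of $A$ constant along the flow, since $\Phi_t$ does not preserve $g$; your own ODE shows they move. The equation $\mu'=2\mu(\mu-\lambda)$ does exclude, by blow-up, eigenvalues outside the closed interval with endpoints $0$ and $\lambda$. But an eigenvalue strictly between them has a global solution. For $\lambda>0$ it tends to $\lambda$ as $t\to-\infty$ and to $0$ as $t\to+\infty$, and symmetrically for $\lambda<0$. This is consistent with the values $\{0,\lambda\}$ at zeros of $\nabla f$, so no contradiction arises. Moreover, for expanders you have not justified that flow lines reach the critical set at all.

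What is needed is constancy of $R$ \emph{before} the algebraic step; you only derive it afterwards, from $A^2=\lambda A$. With $R$ constant, $\operatorname{tr}A^2=\lambda R$ is constant. Differentiating along $V$ with $\nabla_VA=2A(A-\lambda I)$ gives $\operatorname{tr}A^3=\lambda\operatorname{tr}A^2$, and then $\operatorname{tr}A^4=\lambda\operatorname{tr}A^3$. Hence $|A(A-\lambda I)|^2=0$.

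Radial flatness then needs no Bianchi-type identity. First, $\nabla_VA=0$. Differentiating $A(V)=0$ gives $(\nabla_XA)V=-A(\lambda I-A)X=0$. Therefore $R(X,V)V=(\nabla_VA)X-(\nabla_XA)V=0$.
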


Thus, the constancy of the scalar curvature in
Theorem~\ref{thm:criterion-main} is part of the conclusion rather than
a hypothesis. A simple observation used in its proof may also be useful
in other questions concerning gradient Ricci solitons: on any complete
nonsteady gradient Ricci soliton,
\[
\nabla_{\nabla f}R=0
\quad\Longrightarrow\quad
R\ \text{is constant}.
\]
No curvature sign condition or K\"ahler assumption is required.  

The criterion has a natural interpretation in terms of
the Ricci flow. The soliton generates the self-similar solution
$g(t)=(1-2\lambda t)\Phi_{\sigma(t)}^*g$ of
$\partial_tg=-2\operatorname{Ric}_{g(t)}$, where $\Phi_t$ is the flow
of $\nabla f$ and $\sigma$ is a reparametrization of time, and
$\operatorname{Ric}_{g(t)}=\Phi_{\sigma(t)}^*\operatorname{Ric}$. Hence
$\partial_t\operatorname{Ric}_{g(t)}$ is, up to a positive factor, the
pullback of $\mathcal L_{\nabla f}\operatorname{Ric}$, which on a
soliton equals $\Delta_L\operatorname{Ric}$.
Theorem~\ref{thm:criterion-main} thus says that if the Ricci tensor of
this flow is monotone in time, then it is constant in time, and the
soliton is rigid; see Theorem~\ref{thm:riemannian-rigidity} and
Remark~\ref{rem:stationarity}. In the K\"ahler case the sign condition
holds whenever the scalar curvature is plurisubharmonic or
plurisuperharmonic (Corollary~\ref{cor:psh}).

We also give a self-contained proof of the following pinching result.

\begin{theorem}\label{thm:pinching-main}
Let $(M^n,g,f)$ be a complete connected gradient shrinking Ricci soliton
satisfying
\[
\operatorname{Ric}+\nabla^2f=\lambda g,
\qquad \lambda>0.
\]
If
\[
0\leq\operatorname{Ric}\leq\lambda g,
\]
then the scalar curvature is constant,
\[
A(\lambda I-A)=0,
\]
and
\[
R(X,\nabla f)\nabla f=0
\]
for every vector field $X$. Consequently, the soliton is rigid.
\end{theorem}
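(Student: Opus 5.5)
Throughout, $A$ denotes the Ricci endomorphism, $\operatorname{Ric}(X,Y)=g(AX,Y)$. The plan is to prove $\mathcal L_{\nabla f}\operatorname{Ric}=0$, or more directly that $R$ is constant and $A(\lambda I-A)=0$, by an integral identity. Radial flatness then follows, and Petersen--Wylie \cite[Theorem~1.2]{Petersen-Wylie2} gives rigidity. The starting point is the pair of standard soliton identities
\[
\Delta_f R = 2\lambda R - 2|\operatorname{Ric}|^2,
\qquad
\nabla R = 2A(\nabla f),
\]
where $\Delta_f=\Delta-\nabla_{\nabla f}$.

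The pinching $0\le A\le\lambda I$ gives, pointwise, $|\operatorname{Ric}|^2=\operatorname{tr}A^2\le\lambda\operatorname{tr}A=\lambda R$. Hence
\[
\Delta_f R = 2\operatorname{tr}\bigl(A(\lambda I-A)\bigr)\ge 0,
\]
with equality exactly where $A(\lambda I-A)=0$. So $R$ is $f$-subharmonic and bounded, since $0\le R\le n\lambda$. On a complete shrinker $f$ grows quadratically (Cao--Zhou) and the weighted volume $\int e^{-f}$ is finite. I would use the weighted cutoff $\phi_r=\phi(f/r)$, multiply $\Delta_f R\ge0$ by $\phi_r^2 e^{-f}$, and integrate by parts. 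This gives
\[
\int \phi_r^2\,\Delta_f R\,e^{-f}=-\int 2\phi_r\langle\nabla\phi_r,\nabla R\rangle e^{-f}.
\]
Boundedness of $R$ and $|\nabla\phi_r|\le C|\nabla f|/r\le C/\sqrt r$ on the annulus make the right-hand side tend to $0$, provided $|\nabla R|$ is controlled in $L^1(e^{-f})$ there. Since $|\nabla R|=2|A\nabla f|\le2\lambda|\nabla f|\le C\sqrt f$, the integrand is at most $C\sqrt f/\sqrt r\cdot e^{-f}$, which is summable, and its annular tail vanishes. Therefore $\int\operatorname{tr}(A(\lambda I-A))e^{-f}=0$. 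The integrand is nonnegative, so $A(\lambda I-A)=0$ and $|\operatorname{Ric}|^2=\lambda R$.

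Next I would show $R$ is constant. Since $A(\lambda I-A)=0$, the eigenvalues of $A$ lie in $\{0,\lambda\}$, so $R=\lambda\cdot\operatorname{rank}A$ is locally constant where the rank is constant. More robustly, $\Delta_f R=0$ everywhere. Rerunning the cutoff argument with the test function $R\phi_r^2e^{-f}$ yields $\int|\nabla R|^2e^{-f}=0$, hence $R$ is constant. Then $A\nabla f=\tfrac12\nabla R=0$.

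Radial flatness is the main obstacle. From $A\nabla f=0$ I would differentiate to get $(\nabla_X A)\nabla f=-A\nabla_X\nabla f=-A(\lambda X-AX)=-A(\lambda I-A)X=0$, using $\nabla^2 f=\lambda I-A$. The soliton identity
\[
R(X,Y)\nabla f=(\nabla_Y A)X-(\nabla_X A)Y
\]
gives $\langle R(X,\nabla f)\nabla f,Z\rangle=\langle(\nabla_{\nabla f}A)X-(\nabla_XA)\nabla f,Z\rangle=\langle(\nabla_{\nabla f}A)X,Z\rangle$. This is where the partial Codazzi symmetry enters: by the symmetry of $\nabla_X A$, $\langle(\nabla_XA)\nabla f,Z\rangle=\langle(\nabla_XA)Z,\nabla f\rangle$, which vanishes. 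It remains to show $\nabla_{\nabla f}A=0$, i.e.\ $\mathcal L_{\nabla f}\operatorname{Ric}=0$. Since $A$ is a projection of constant rank ($\lambda^{-1}A$ is idempotent with constant trace $R/\lambda$), $\nabla_{\nabla f}A$ maps $\ker A$ to $\operatorname{im}A$ and vice versa. I would combine this with the identity $\Delta_f A=2\lambda A-2\,\mathrm{Rm}*A$ and the vanishing of $|\operatorname{Ric}|^2-\lambda R$ to force $|\nabla A|^2=0$. Failing that, I would use the weighted Bochner formula $\tfrac12\Delta_f|\operatorname{Ric}|^2=|\nabla\operatorname{Ric}|^2+\dots$ together with $|\operatorname{Ric}|^2$ constant. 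The conclusion $\nabla A=0$ gives $R(X,\nabla f)\nabla f=0$, and Petersen--Wylie finishes the proof.
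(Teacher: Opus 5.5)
Your first two steps are correct. Testing $\Delta_fR=2\operatorname{tr}(A(\lambda I-A))\ge0$ against $\phi(f/r)^2e^{-f}$ gives $A(\lambda I-A)=0$ directly. Then $R=\lambda\operatorname{rank}A$ is constant, since it is continuous and takes values in $\lambda\mathbb Z$. The paper goes in the other order: it tests against $\phi_r^2R$ to get $\nabla R=0$ first, which makes no real difference. Your reduction of radial flatness to $\nabla_VA=0$, via $(\nabla_XA)V=0$ and $R(X,V)V=(\nabla_VA)X$, also agrees with the paper, as does your remark that $\nabla_VA$ has only mixed $E_0$--$E_\lambda$ blocks.

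The gap is the last step. Tracing your identity for $\Delta_fA$ against $A$, with $|\operatorname{Ric}|^2=\lambda R$ constant, is literally the Bochner formula. Both give only
$|\nabla\operatorname{Ric}|^2=2(Q(A)-\lambda|\operatorname{Ric}|^2)=-2\lambda^2\sum_{\mu_i=\lambda,\,\mu_j=0}K_{ij}$,
a sum of mixed sectional curvatures for which the hypotheses give no a priori sign. So nothing forces $\nabla A=0$, which is in any case more than you need and which the paper never proves directly. The identity you call the partial Codazzi symmetry, $\langle(\nabla_XA)V,Z\rangle=\langle(\nabla_XA)Z,V\rangle$, is only the self-adjointness of $\nabla_XA$. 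What is actually required is to swap the differentiation slot with the argument, $(\nabla_VA)F=(\nabla_FA)V$ for $F\in E_0$, and that is not formal.

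The missing idea is a second, independent expression for the curvature term. Since $\operatorname{div}A=0$ and $A^2=\lambda A$, the vector field $W=\sum_i(\nabla_{e_i}A)(Ae_i)=\operatorname{div}(A^2)-A\operatorname{div}A$ vanishes. Compare $\operatorname{div}W=0$ with the derivative of $\operatorname{div}A=0$ contracted against $Ae_i$, and use the Ricci commutation formula. This gives $\sum_{i,j,k}a_{ijk}a_{jik}=Q(A)-\operatorname{tr}(A^3)$, where $a_{ijk}=\langle(\nabla_{e_i}A)e_j,e_k\rangle$, so Bochner becomes $|\nabla\operatorname{Ric}|^2=2\sum_{i,j,k}a_{ijk}a_{jik}$.

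Now use the block structure. Since $a_{ijk}=0$ unless $\mu_j\ne\mu_k$, and the spectrum is $\{0,\lambda\}$, only products with $\mu_i=\mu_j$ survive. The squares $a_{ijk}^2$ with $\mu_i=\mu_j$ make up exactly half of $|\nabla\operatorname{Ric}|^2$. Hence Cauchy--Schwarz is an equality, and $a_{ijk}=a_{jik}$ whenever $\mu_i=\mu_j$.

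Since $V\in E_0$, this yields $T_VF=(\nabla_VA)F=(\nabla_FA)V=0$ for $F\in E_0$. Because $T_V$ is self-adjoint and maps $E_\lambda$ into $E_0$, it vanishes on $E_\lambda$ as well. Only then does Petersen--Wylie apply. Without this step, or a substitute for it, your argument stops at constant $R$ and $A(\lambda I-A)=0$.
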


Theorem~\ref{thm:pinching-main} is due to Naber \cite[Theorem~2]{Naber}
and to Petersen and Wylie
\cite[Lemma~1.4 and Proposition~1.3(2)]{Petersen-Wylie2}. Our proof is
self-contained up to the standard characterization of rigidity by
constant scalar curvature and radial flatness. The constancy of $R$
follows from a weighted cutoff argument. Once $A^2=\lambda A$ is known,
the Ricci Bochner identity forces a partial Codazzi symmetry for $A$,
and this symmetry gives radial flatness.

The partial Codazzi symmetry is related to recent rigidity results for
Codazzi operators. Cheng, Lipa and Zhou \cite{Cheng-Lipa-Zhou} use
scalar curvature information together with trace invariants of a Codazzi
symmetric tensor to obtain rigidity of its eigenvalues. In the present
paper, the Ricci endomorphism is not assumed to be a Codazzi tensor.
Instead, the relation $A^2=\lambda A$ and the Ricci Bochner identity
force the restricted symmetry
\[
(\nabla_XA)Y=(\nabla_YA)X
\]
when $X$ and $Y$ belong to the same Ricci eigendistribution. This gives
a useful connection between the two rigidity mechanisms.

The paper is organized as follows. Section~2 collects the soliton
identities and curvature conventions used throughout. Section~3 proves
the K\"ahler identity $\mathcal L_{\nabla f}\operatorname{Ric}=0$.
Section~4 proves Theorem~\ref{thm:criterion-main}, in the form of
Theorem~\ref{thm:riemannian-rigidity}, and Theorem~\ref{mainthm}. Section~5 proves
Theorem~\ref{thm:pinching-main}, discusses the partial Codazzi symmetry.

\section{Preliminaries}

Let $(M^n,g,f)$ be a gradient Ricci soliton
\begin{equation}\label{eq:soliton}
\operatorname{Ric}+\nabla^2f=\lambda g.
\end{equation}
Set
\[
V=\nabla f
\]
and let $A:TM\to TM$ denote the Ricci endomorphism,
\[
\operatorname{Ric}(X,Y)=g(AX,Y).
\]
We use
\[
\Delta_f=\Delta-\langle\nabla f,\nabla\cdot\rangle.
\]

Our curvature convention is
\[
R(X,Y)Z
=
\nabla_X\nabla_YZ-\nabla_Y\nabla_XZ-\nabla_{[X,Y]}Z,
\]
and for orthonormal $X,Y$ we write
\[
K(X,Y)=\langle R(X,Y)Y,X\rangle.
\]

We use the standard soliton identities
\begin{align}
\nabla_XV&=(\lambda I-A)X,
\label{eq:hess}\\
\nabla R&=2A(V),
\label{eq:gradR-general}\\
\Delta_fR&=2\lambda R-2|\operatorname{Ric}|^2.
\label{eq:scalar-drift}
\end{align}
Differentiating \eqref{eq:soliton} and commuting derivatives gives
\begin{equation}\label{eq:soliton-commutator}
(\nabla_XA)Y-(\nabla_YA)X
=
-R(X,Y)V.
\end{equation}
The contracted Bianchi identity is
\begin{equation}\label{eq:divA}
\operatorname{div}A=\frac12\nabla R,
\end{equation}
where
\[
\operatorname{div}A=\sum_i(\nabla_{e_i}A)e_i.
\]

For the Ricci tensor, the $f$-Bochner formula is
\begin{equation}\label{eq:bochner-ricci}
\frac12\Delta_f|\operatorname{Ric}|^2
=
|\nabla\operatorname{Ric}|^2
+
2\lambda|\operatorname{Ric}|^2
-
2Q(A),
\end{equation}
where, at a point at which $Ae_i=\mu_i e_i$ in an orthonormal basis,
\begin{equation}\label{eq:QA}
Q(A)
=
\sum_{i,j}\mu_i\mu_jK_{ij},
\qquad
K_{ij}=\langle R(e_i,e_j)e_j,e_i\rangle.
\end{equation}

If $\lambda>0$, a complete gradient Ricci shrinker has finite weighted
volume,
\begin{equation}\label{eq:finite-weighted-volume}
\int_Me^{-f}\,\mathrm{d}v<\infty.
\end{equation}
For the noncompact case this follows, for instance, from the standard
potential and volume estimates of Cao--Zhou \cite{Cao-Zhou}; the compact
case is immediate.

\section{The K\"ahler identity}

The K\"ahler structure gives a useful expression for the Lie derivative
of the Ricci tensor without assuming that the scalar curvature is
constant.

\begin{proposition}\label{prop:kahler-lie}
Let $(M^{2m},g,f,J)$ be a gradient K\"ahler--Ricci soliton satisfying
\[
\operatorname{Ric}+\nabla^2f=\lambda g,
\]
and set $V=\nabla f$. Then
\begin{equation}\label{eq:kahler-rho-lie}
\mathcal L_V\rho=\frac12\,\mathrm d\mathrm d^cR,
\end{equation}
where $\rho(X,Y)=\operatorname{Ric}(JX,Y)$ and
$\mathrm d^cu=-\mathrm du\circ J$. Moreover,
\begin{equation}\label{eq:kahler-lie}
(\mathcal L_V\operatorname{Ric})(X,Y)
=\frac12\bigl(\nabla^2R(X,Y)+\nabla^2R(JX,JY)\bigr).
\end{equation}
\end{proposition}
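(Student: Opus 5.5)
Apply Cartan's formula to the Ricci form $\rho$, then translate back to $\operatorname{Ric}$ using that $V$ preserves $J$.

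\emph{Step 1: closedness of $\rho$.} On a K\"ahler manifold the Ricci form $\rho(X,Y)=\operatorname{Ric}(JX,Y)$ is closed. Cartan's formula therefore gives
\[
\mathcal L_V\rho=\mathrm d(\iota_V\rho).
\]

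\emph{Step 2: computing $\iota_V\rho$.} We have
\[
(\iota_V\rho)(Y)=\operatorname{Ric}(JV,Y)=g(AJV,Y).
\]
On a K\"ahler manifold $A$ commutes with $J$, so $AJV=JAV$. By \eqref{eq:gradR-general}, $AV=\frac12\nabla R$. Hence
\[
(\iota_V\rho)(Y)=\tfrac12 g(J\nabla R,Y)=-\tfrac12 g(\nabla R,JY)=-\tfrac12\,\mathrm dR(JY)=\tfrac12\,\mathrm d^cR(Y).
\]
Here we used that $J$ is skew-adjoint. Taking $\mathrm d$ gives \eqref{eq:kahler-rho-lie}. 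The only point needing care is the sign conventions for $\mathrm d^c$ and $\rho$, which I would check against the stated definitions.

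\emph{Step 3: passing from $\rho$ to $\operatorname{Ric}$.} By definition of a K\"ahler--Ricci soliton, $V$ is real holomorphic, so $\mathcal L_VJ=0$. Since $\operatorname{Ric}(X,Y)=-\rho(X,JY)$, we get
\[
(\mathcal L_V\operatorname{Ric})(X,Y)=-(\mathcal L_V\rho)(X,JY).
\]
It remains to evaluate $\mathrm d\mathrm d^cR(X,JY)$. I would use the standard identity
\[
\mathrm d\mathrm d^cu(X,Y)=\nabla^2u(X,JY)-\nabla^2u(JX,Y)
\]
for torsion-free $\nabla$ with $\nabla J=0$. This follows from writing $\mathrm d^cu=g(J\nabla u,\cdot)$, so that $\mathrm d(\mathrm d^cu)(X,Y)=g(\nabla_X(J\nabla u),Y)-g(\nabla_Y(J\nabla u),X)$, together with $\nabla J=0$ and the skew-adjointness of $J$. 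Substituting $Y\mapsto JY$ and $J^2=-I$ gives
\[
-\mathrm d\mathrm d^cR(X,JY)=\nabla^2R(X,Y)+\nabla^2R(JX,JY).
\]
Multiplying by $\frac12$ yields \eqref{eq:kahler-lie}.

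\emph{Main obstacle.} The argument is short, so the main risk is sign bookkeeping: the convention $\rho(X,Y)=\operatorname{Ric}(JX,Y)$, the definition $\mathrm d^cu=-\mathrm du\circ J$, and the overall sign in the $\mathrm d\mathrm d^c$ formula. As a cross-check, I would also derive \eqref{eq:kahler-lie} directly. From $\mathcal L_VJ=0$ we get $\nabla_{JX}V=J\nabla_XV$, and then
\[
(\mathcal L_V\operatorname{Ric})(X,Y)=(\nabla_V\operatorname{Ric})(X,Y)+2\operatorname{Ric}(\lambda X-AX,Y).
\]
The $J$-invariance of both sides should agree with the $J$-symmetrized Hessian on the right of \eqref{eq:kahler-lie}.
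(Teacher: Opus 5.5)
Your route is the paper's: $\mathrm d\rho=0$ and Cartan's formula, $\iota_V\rho=\tfrac12\mathrm d^cR$ from $\nabla R=2AV$ and $AJ=JA$, then $\mathcal L_VJ=0$ and a Hessian expression for $\mathrm d\mathrm d^cR$. Steps 1 and 2 are correct as written. The paper checks $\mathcal L_VJ=0$ from $\nabla V=\lambda I-A$ and $AJ=JA$ rather than citing the definition; either is fine. In Step 3, however, two of your identities have the wrong sign. You reach the right formula only because the two errors cancel.

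First, the $J$-invariance of $\operatorname{Ric}$ gives $\rho(X,JY)=\operatorname{Ric}(JX,JY)=\operatorname{Ric}(X,Y)$. So $\operatorname{Ric}(X,Y)=+\rho(X,JY)$, not $-\rho(X,JY)$, and hence $(\mathcal L_V\operatorname{Ric})(X,Y)=(\mathcal L_V\rho)(X,JY)$.

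Second, carry out the derivation you sketch. You get $g(\nabla_X(J\nabla u),Y)=g(J\nabla_X\nabla u,Y)=-\nabla^2u(X,JY)$, hence
\[
\mathrm d\mathrm d^cu(X,Y)=\nabla^2u(JX,Y)-\nabla^2u(X,JY),
\]
which is the negative of the identity you stated. As a test, take $u=x^2+y^2$ on $\mathbb C$. Then $\mathrm d^cu=-2y\,\mathrm dx+2x\,\mathrm dy$ and $\mathrm d\mathrm d^cu=4\,\mathrm dx\wedge\mathrm dy$, whereas your formula gives $-4$ on $(\partial_x,\partial_y)$.

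Consequently $\mathrm d\mathrm d^cR(X,JY)=\nabla^2R(X,Y)+\nabla^2R(JX,JY)$ with a plus sign, which is exactly the identity the paper uses. Then $(\mathcal L_V\operatorname{Ric})(X,Y)=\tfrac12\mathrm d\mathrm d^cR(X,JY)$ yields \eqref{eq:kahler-lie}. You must correct both signs together; fixing only one would give \eqref{eq:kahler-lie} with the wrong overall sign. With both corrections the proof is complete and coincides with the paper's.
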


\begin{proof}
Since $(M,g,J)$ is K\"ahler, the Ricci form is closed,
\[
\mathrm d\rho=0.
\]
By \eqref{eq:gradR-general} and the $J$-invariance of the Ricci tensor,
\begin{align*}
(\iota_V\rho)(Y)
&=\rho(V,Y)
=\operatorname{Ric}(JV,Y)\\
&=-\operatorname{Ric}(V,JY)
=-\frac12\,\mathrm dR(JY)
=\frac12\,\mathrm d^cR(Y).
\end{align*}
Hence Cartan's formula gives
\[
\mathcal L_V\rho
=\mathrm d(\iota_V\rho)+\iota_V\mathrm d\rho
=\frac12\,\mathrm d\mathrm d^cR,
\]
which proves \eqref{eq:kahler-rho-lie}.

We also have
\begin{equation}\label{eq:lie-J}
\mathcal L_VJ=0.
\end{equation}
Indeed, since $\nabla J=0$,
\begin{align*}
(\mathcal L_VJ)(Y)
&=[V,JY]-J[V,Y]\\
&=-\nabla_{JY}V+J\nabla_YV.
\end{align*}
Using \eqref{eq:hess} and the fact that $A$ commutes with $J$,
\[
\nabla_{JY}V=(\lambda I-A)JY
=J(\lambda I-A)Y
=J\nabla_YV,
\]
so \eqref{eq:lie-J} follows.

Since
\[
\rho(X,Y)=\operatorname{Ric}(JX,Y),
\]
we obtain
\[
(\mathcal L_V\rho)(X,Y)
=(\mathcal L_V\operatorname{Ric})(JX,Y)
+\operatorname{Ric}((\mathcal L_VJ)X,Y).
\]
By \eqref{eq:lie-J}, the second term vanishes. On the other hand,
$\nabla J=0$ gives
\[
\mathrm d\mathrm d^cR(X,JY)
=\nabla^2R(X,Y)+\nabla^2R(JX,JY).
\]
Combining this with \eqref{eq:kahler-rho-lie} and replacing $X$ by
$-JX$ yields \eqref{eq:kahler-lie}.
\end{proof}

\begin{theorem}\label{thm:lie-ricci}
Let $(M^{2m},g,f,J)$ be a gradient K\"ahler--Ricci soliton satisfying
\[
\operatorname{Ric}+\nabla^2f=\lambda g.
\]
If the scalar curvature is constant, then
\[
\mathcal L_{\nabla f}\operatorname{Ric}=0.
\]
\end{theorem}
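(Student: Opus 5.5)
This follows directly from Proposition~\ref{prop:kahler-lie}. The plan is to invoke identity \eqref{eq:kahler-lie}, which expresses $\mathcal L_V\operatorname{Ric}$ entirely in terms of the Hessian of $R$:
\[
(\mathcal L_V\operatorname{Ric})(X,Y)
=\tfrac12\bigl(\nabla^2R(X,Y)+\nabla^2R(JX,JY)\bigr),
\qquad V=\nabla f.
\]
If $R$ is constant, then $\nabla^2R\equiv0$, so both terms on the right vanish for all $X,Y$. This gives $\mathcal L_{\nabla f}\operatorname{Ric}=0$.

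Equivalently, one can use the form-level statement \eqref{eq:kahler-rho-lie}. If $R$ is constant, then $\mathrm dR=0$, hence $\mathrm d^cR=0$ and $\mathcal L_V\rho=0$. Since $\mathcal L_VJ=0$ by \eqref{eq:lie-J}, and $\operatorname{Ric}(X,Y)=\rho(X,JY)$ up to the sign convention, the Leibniz rule transfers this to $\mathcal L_V\operatorname{Ric}=0$.

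There is essentially no obstacle. The only content is the proposition already proved: the closedness of the Ricci form, the identity $\nabla R=2A(V)$, and the holomorphicity of $V$ encoded in $\mathcal L_VJ=0$. No completeness or sign of $\lambda$ is needed; the argument is purely local.
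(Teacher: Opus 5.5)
Your proof is correct and is exactly the paper's argument: constant $R$ gives $\nabla^2R=0$, and identity \eqref{eq:kahler-lie} of Proposition~\ref{prop:kahler-lie} then yields $\mathcal L_{\nabla f}\operatorname{Ric}=0$. Your alternative route via $\mathcal L_V\rho=0$ and $\mathcal L_VJ=0$ is also valid, and there is no sign ambiguity, since $\operatorname{Ric}(X,Y)=\rho(X,JY)$ holds exactly.
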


\begin{proof}
If $R$ is constant, then $\nabla^2R=0$. The conclusion follows
immediately from Proposition~\ref{prop:kahler-lie} and
\eqref{eq:kahler-lie}.
\end{proof}

\section{A Riemannian Lie-derivative criterion}

The results in this section are purely Riemannian. On a complete
gradient Ricci soliton the vector field $\nabla f$ is complete and the
scalar curvature is bounded below \cite[Theorem~1.3]{Zhang-ZH}. We
denote by $\Phi_t$ the flow of $\nabla f$.

\begin{lemma}\label{lem:scalar-along-V}
Let $(M^n,g,f)$ be a complete gradient Ricci soliton satisfying
\eqref{eq:soliton} with $\lambda\neq0$, and set $V=\nabla f$. If
$V(R)=0$, then $R$ is constant.
\end{lemma}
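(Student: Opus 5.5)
The plan is to use the flow $\Phi_t$ of $V=\nabla f$, which is complete on a complete soliton. We show that every flow line converges, in the appropriate time direction, to a critical point of $f$. Since $R$ is constant along flow lines, this restricts the values of $R$ to a set of measure zero, and connectedness then finishes the argument.

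First, the hypothesis and \eqref{eq:gradR-general} give $0=V(R)=\langle\nabla R,V\rangle=2\operatorname{Ric}(V,V)$. Using \eqref{eq:hess},
\[
V\bigl(|V|^2\bigr)=2\langle\nabla_VV,V\rangle=2\lambda|V|^2-2\operatorname{Ric}(V,V)=2\lambda|V|^2 .
\]
Hence along any integral curve $\gamma(t)=\Phi_t(x)$ we get the exact formula $|V|^2(\gamma(t))=e^{2\lambda t}|V|^2(x)$. Since $V(R)=0$, we also have $R(\gamma(t))=R(x)$ for all $t$.

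Second, take $\lambda>0$ and let $t\to-\infty$; for $\lambda<0$ let $t\to+\infty$ instead. The length of the half-trajectory is
\[
\int |\dot\gamma|\,dt=\int e^{\lambda t}|V(x)|\,dt<\infty .
\]
So $\gamma(t)$ is Cauchy, and by completeness it converges to a point $p$. Since $|V|(\gamma(t))\to0$, continuity gives $V(p)=0$, so $p$ is a critical point of $f$. By continuity of $R$, $R(x)=R(p)$. If $V(x)=0$, simply take $p=x$.

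Third, recall the standard identity $R+|\nabla f|^2-2\lambda f=C$, obtained by combining \eqref{eq:gradR-general} with \eqref{eq:hess}. At a critical point this gives $R(p)=C+2\lambda f(p)$. Therefore
\[
R(M)\subset C+2\lambda\,\{\text{critical values of } f\}.
\]
By Sard's theorem the set of critical values of the smooth function $f$ has measure zero, so $R(M)$ has measure zero. On the other hand, $R$ is continuous and $M$ is connected, so $R(M)$ is an interval. An interval of measure zero is a single point, hence $R$ is constant.

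The main point requiring care is the second step: it relies on having the exact exponential decay of $|V|$ along backward (respectively forward) trajectories, which comes directly from $\operatorname{Ric}(V,V)=0$. The finite-length argument then uses only completeness of $g$ and of the flow, so no curvature assumption is needed.
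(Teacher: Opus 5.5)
Your proof is correct and follows the paper's argument essentially step for step. The steps are the same: exponential growth of $|V|$ along flow lines, finite-length convergence to a zero of $V$, constancy of $R$ along orbits, and Sard's theorem plus connectedness. The only difference is minor: you apply Sard to $f$ through the identity $R+|\nabla f|^2-2\lambda f=C$. The paper instead observes that $\nabla R(q)=2A(V(q))=0$ at the limit point and applies Sard to $R$ itself.
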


\begin{proof}
Let $\Phi_t$ be the flow of $V$ and fix $p\in M$. Consider the integral
curve
\[
\gamma(t)=\Phi_t(p),
\qquad
\gamma'(t)=V(\gamma(t)).
\]
If $p$ is a critical point of $f$, then $\gamma$ is constant and the
argument below applies with $q=p$.
Using \eqref{eq:hess} and \eqref{eq:gradR-general}, we obtain along
$\gamma$
\begin{align*}
\frac{\mathrm{d}}{\mathrm{d}t}|V|^2(\gamma(t))
&=2\langle\nabla_VV,V\rangle(\gamma(t))\\
&=2\lambda|V|^2(\gamma(t))-2\operatorname{Ric}(V,V)(\gamma(t))\\
&=2\lambda|V|^2(\gamma(t))-V(R)(\gamma(t)).
\end{align*}
By hypothesis $V(R)=0$, and hence
\[
\frac{\mathrm{d}}{\mathrm{d}t}|V|^2(\gamma(t))
=2\lambda|V|^2(\gamma(t)).
\]
Therefore
\begin{equation}\label{eq:V-growth-orbit}
|V|(\gamma(t))=e^{\lambda t}|V|(p).
\end{equation}

Suppose first that $\lambda>0$. For $s<t\leq0$,
\begin{align*}
d(\gamma(s),\gamma(t))
&\leq\int_s^t|\gamma'(u)|\,\mathrm{d}u\\
&=|V|(p)\int_s^t e^{\lambda u}\,\mathrm{d}u.
\end{align*}
Since
\[
\int_{-\infty}^0e^{\lambda u}\,\mathrm{d}u<\infty,
\]
the curve $\gamma(t)$ is Cauchy as $t\to-\infty$. By completeness of
$M$, there exists $q\in M$ such that
\[
\gamma(t)\longrightarrow q
\qquad\text{as }t\to-\infty.
\]
Moreover, \eqref{eq:V-growth-orbit} gives
\[
|V|(\gamma(t))\longrightarrow0,
\]
and therefore, by continuity,
\[
V(q)=0.
\]

If $\lambda<0$, the same argument is applied as $t\to+\infty$, since
\[
\int_0^{+\infty}e^{\lambda u}\,\mathrm{d}u<\infty.
\]
Thus, in either case, the orbit through every point $p\in M$ converges
in one time direction to a point $q\in M$ satisfying
\[
V(q)=0.
\]

On the other hand,
\[
\frac{\mathrm{d}}{\mathrm{d}t}R(\gamma(t))
=V(R)(\gamma(t))=0.
\]
Hence $R$ is constant along $\gamma$, and therefore
\[
R(p)=R(q).
\]
Since $V(q)=0$, \eqref{eq:gradR-general} gives
\[
\nabla R(q)=2A(V(q))=0.
\]

Consequently, for every $p\in M$, the value $R(p)$ is attained at a
critical point of $R$. Thus
\[
R(M)\subseteq\operatorname{CritVal}(R),
\]
where $\operatorname{CritVal}(R)$ denotes the set of critical values of
$R$.

By Sard's theorem, $\operatorname{CritVal}(R)$ has Lebesgue measure
zero. Hence $R(M)$ has measure zero. Since $M$ is connected and $R$ is
continuous, $R(M)\subset\mathbb R$ is an interval. The only interval of
measure zero is a single point. Therefore $R$ is constant on $M$.
\end{proof}

\begin{theorem}\label{thm:riemannian-rigidity}
Let $(M^n,g,f)$ be a complete nonsteady gradient Ricci soliton
satisfying
\[
\operatorname{Ric}+\nabla^2f=\lambda g,
\qquad \lambda\neq0.
\]
Assume that either
\[
\mathcal L_{\nabla f}\operatorname{Ric}\geq0
\quad\text{on }M
\qquad\text{or}\qquad
\mathcal L_{\nabla f}\operatorname{Ric}\leq0
\quad\text{on }M.
\]
Then the flow of $\nabla f$ preserves the Ricci tensor,
\begin{equation}\label{eq:ric-stationary}
\Phi_t^*\operatorname{Ric}=\operatorname{Ric}
\qquad\text{for all }t\in\mathbb R,
\end{equation}
that is, $\mathcal L_{\nabla f}\operatorname{Ric}=0$. Moreover, the
scalar curvature is constant,
\begin{equation}\label{eq:A-polynomial}
A(A-\lambda I)=0,
\end{equation}
and
\[
R(X,\nabla f)\nabla f=0
\]
for every vector field $X$. In particular, the soliton is radially flat
and hence rigid.
\end{theorem}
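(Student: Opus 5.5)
The plan is to first prove $T:=\mathcal L_V\operatorname{Ric}=0$ (this is the hard part), then obtain $R$ constant, then pin down the Ricci eigenvalues with an explicit ODE for $\Phi_t^*g$, and finally read off radial flatness from \eqref{eq:soliton-commutator}.

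\textbf{Trace and radial component of $T$.} From \eqref{eq:hess},
\[
T=\nabla_V\operatorname{Ric}+2\operatorname{Ric}\bigl((\lambda I-A)\cdot,\cdot\bigr).
\]
Taking the trace and using \eqref{eq:scalar-drift} gives
\[
\operatorname{tr}_gT=V(R)+2\lambda R-2|\operatorname{Ric}|^2=\Delta R .
\]
Evaluating on $(V,V)$ and using $V(R)=2\operatorname{Ric}(V,V)$ from \eqref{eq:gradR-general} gives
\[
T(V,V)=\tfrac12\,V\bigl(V(R)\bigr).
\]
So under either sign hypothesis, $R$ is sub- or superharmonic, and $t\mapsto V(R)(\Phi_t(p))$ is monotone along every orbit.

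\textbf{Showing $T=0$ (main obstacle).} The first thing I would try is weighted integration against $e^{-f}$ with cutoffs $\varphi$. For $\lambda>0$ this uses \eqref{eq:finite-weighted-volume}, the Cao--Zhou growth of $f$, and $R\ge0$. The goal is an integral identity whose integrand is $\operatorname{tr}_gT=\Delta R$, which has a fixed sign, plus boundary terms that vanish as the cutoff expands. The difficulty is that
\[
\int\Delta R\,e^{-f}=2\int\operatorname{Ric}(V,V)e^{-f}
\]
is not obviously zero. So I would look for a second identity, for instance pairing $T$ with $g$ and with $\operatorname{Ric}$, or using $\int\Delta_fR\,e^{-f}=0$, so that a sign-definite combination integrates to zero. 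For $\lambda<0$ I would replace $e^{-f}$ by the appropriate weight with $-f$ proper. If integration fails, the fallback is the orbit picture. Along $\Phi_t$ the tensors $\Phi_t^*\operatorname{Ric}$ are monotone at a fixed point, so I would try to show they stay bounded in both time directions relative to a fixed background and must therefore be constant. I expect this to be the crux.

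\textbf{Constancy of $R$.} Once $T=0$, we get $V(V(R))=0$, so $V(R)$ is constant along each orbit. Then
\[
V(R)=2\operatorname{Ric}(V,V)=-\tfrac{d}{dt}|V|^2+2\lambda|V|^2
\]
along the orbit. I would combine this with the convergence argument in the proof of Lemma~\ref{lem:scalar-along-V}, where orbits accumulate at zeros of $V$, to force $V(R)=0$. Lemma~\ref{lem:scalar-along-V} then gives $R$ constant. Tracing $T=0$ now yields $|\operatorname{Ric}|^2=\lambda R$, which is constant.

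\textbf{Eigenvalues of $A$.} Since $\Phi_t^*\operatorname{Ric}=\operatorname{Ric}$ and $\partial_t\Phi_t^*g=\Phi_t^*\mathcal L_Vg=2\lambda\Phi_t^*g-2\operatorname{Ric}$, we have explicitly
\[
\Phi_t^*g=e^{2\lambda t}\Bigl(g-\tfrac1\lambda\operatorname{Ric}\Bigr)+\tfrac1\lambda\operatorname{Ric}.
\]
Hence, if $\mu$ is an eigenvalue of $A$ at $p$, then $A$ at $\Phi_t(p)$ has eigenvalue
\[
\frac{\mu}{e^{2\lambda t}(1-\mu/\lambda)+\mu/\lambda}.
\]
Since $|A|^2=\lambda R$ is constant, these eigenvalues are bounded for all $t\in\mathbb R$. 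This forces $\mu\in\{0,\lambda\}$, because otherwise the denominator vanishes or the eigenvalue degenerates, which is a contradiction. This proves \eqref{eq:A-polynomial}.

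\textbf{Radial flatness and rigidity.} We have $AV=\tfrac12\nabla R=0$, and $\nabla_VA=-2A(\lambda I-A)=0$ by $T=0$ and \eqref{eq:A-polynomial}. By \eqref{eq:hess} and \eqref{eq:A-polynomial},
\[
(\nabla_XA)V=\nabla_X(AV)-A(\lambda I-A)X=0 .
\]
Therefore \eqref{eq:soliton-commutator} gives
\[
R(X,V)V=(\nabla_VA)X-(\nabla_XA)V=0 .
\]
Constant scalar curvature together with radial flatness yields rigidity by \cite[Theorem~1.2]{Petersen-Wylie2}.
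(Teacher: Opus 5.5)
Your proposal has a genuine gap at the step you flag as the crux: $T=\mathcal L_V\operatorname{Ric}=0$ is never proved, and everything after it depends on it. The weighted-integration idea stalls for the reason you note. The orbit fallback also fails as stated, since a monotone family of quadratic forms that is bounded in both time directions need not be constant.

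The missing idea is to reverse the order, getting $R$ constant first and $T=0$ second, and to work one derivative higher. You already have $(R\circ\gamma)''=V(V(R))\circ\gamma=2T(V,V)\circ\gamma$ along $\gamma(t)=\Phi_t(p)$. So $R\circ\gamma$ is concave on all of $\mathbb R$ if $T\le0$ and convex if $T\ge0$. A concave function on $\mathbb R$ that is bounded below, or a convex one bounded above, is constant.

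For $T\le0$ the needed bound is the standard lower bound for $R$ on complete gradient solitons. For $T\ge0$ you need an upper bound, which the paper obtains by a Riccati comparison. Take a parallel unit field $U$ along $\gamma$ and set $a=\operatorname{Ric}(U,U)$. Then
\[
a'=T(U,U)+2|AU|^2-2\lambda a\ \geq\ 2a(a-\lambda).
\]
If $a>\max\{0,\lambda\}$ at some time, $a$ would blow up in finite forward time, contradicting completeness of the flow. Hence $\operatorname{Ric}\le\max\{0,\lambda\}g$, so $R$ is bounded above.

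Thus $V(R)=0$, and Lemma~\ref{lem:scalar-along-V} gives $R$ constant. Only then does $\operatorname{tr}_gT=\Delta R=0$, together with the semidefinite sign, force $T=0$. Even granting $T=0$, your derivation of $V(R)=0$ should use the lower bound on $R$, not the orbit-convergence argument, which needs $V(R)=0$ as input. With $T=0$, $R\circ\gamma$ is affine and bounded below, so its slope vanishes.

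Your eigenvalue step is also wrong as written, though repairable. The formula for $\Phi_t^*g$ is correct, and positivity of $\Phi_t^*g$ for every $t\in\mathbb R$ does force $0\le\mu/\lambda\le1$. But for $0<\mu/\lambda<1$ the transported eigenvalue $\mu/\bigl(e^{2\lambda t}(1-\mu/\lambda)+\mu/\lambda\bigr)$ has a nonvanishing denominator and stays bounded, interpolating between $\lambda$ and $0$. So boundedness excludes nothing. What works is constancy: each intermediate eigenvalue is strictly decreasing in $t$ (for either sign of $\lambda$), while the eigenvalues $0$ and $\lambda$ are fixed. Constancy of $R=\operatorname{tr}A$ along the orbit therefore rules out intermediate eigenvalues.

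With this fix your ODE argument is a valid alternative to the paper's. The paper instead differentiates $\operatorname{tr}(A^2)=\lambda R$ and then $\operatorname{tr}(A^3)$ along $V$, using $\nabla_VA=2A(A-\lambda I)$, to get $|A(A-\lambda I)|^2=\operatorname{tr}(A^4)-2\lambda\operatorname{tr}(A^3)+\lambda^2\operatorname{tr}(A^2)=0$. Your radial-flatness step coincides with the paper's.
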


\begin{proof}
Set $V=\nabla f$ and $T=\mathcal L_V\operatorname{Ric}$. Since
$\frac{\mathrm{d}}{\mathrm{d}t}\Phi_t^*\operatorname{Ric}=\Phi_t^*T$,
\eqref{eq:ric-stationary} is equivalent to $T=0$. For vector fields
$X,Y$, by \eqref{eq:hess},
\begin{equation}\label{eq:lie-nablaV}
\begin{aligned}
T(X,Y)
&=(\nabla_V\operatorname{Ric})(X,Y)
+\operatorname{Ric}(\nabla_XV,Y)
+\operatorname{Ric}(X,\nabla_YV)\\
&=g\bigl((\nabla_VA+2\lambda A-2A^2)X,Y\bigr).
\end{aligned}
\end{equation}
Taking the trace and using \eqref{eq:scalar-drift},
\begin{equation}\label{eq:lie-trace}
\operatorname{tr}_gT
=V(R)+2\lambda R-2|\operatorname{Ric}|^2
=\Delta R.
\end{equation}
Moreover, \eqref{eq:gradR-general} gives
$\operatorname{Ric}(V,V)=\frac12V(R)$. Since $[V,V]=0$,
\begin{equation}\label{eq:lie-second}
T(V,V)=\frac12V(V(R)).
\end{equation}
Fix $p\in M$ and set $\gamma(t)=\Phi_t(p)$, $t\in\mathbb R$. By
\eqref{eq:lie-second}, the second derivative of $R\circ\gamma$ is
$2T(V,V)\circ\gamma$.

Suppose first that $T\leq0$. Then $R\circ\gamma$ is concave on
$\mathbb R$ and bounded below, hence constant.

Suppose now that $T\geq0$. Let $U$ be a parallel unit vector field
along $\gamma$ and set $a(t)=g(AU,U)(\gamma(t))$. By
\eqref{eq:lie-nablaV},
\[
a'=T(U,U)+2|AU|^2-2\lambda a.
\]
Since $A$ is self-adjoint and $|U|=1$, the Cauchy--Schwarz inequality
gives
\[
|AU|^2\geq\langle AU,U\rangle^2=a^2.
\]
Therefore
\[
a'\geq2a(a-\lambda).
\]
Write $\beta=\max\{0,\lambda\}$. If $a(t_0)>\beta$, then
$b=a-\beta$ satisfies $b'\geq2(b+\beta)(b+\beta-\lambda)\geq2b^2$
while $b>0$, so $b$ remains positive on $[t_0,\infty)$ and
$(1/b)'\leq-2$ there, which is impossible. Since the point
$p$ and the initial unit vector are arbitrary, it follows that
\[
\operatorname{Ric}\leq\beta g
\]
on $M$. Hence $R\leq n\beta$, and $R\circ\gamma$ is convex on
$\mathbb R$ and bounded above, hence constant.

In both cases $V(R)=0$, and Lemma~\ref{lem:scalar-along-V} shows that
$R$ is constant. Then \eqref{eq:lie-trace} gives
$\operatorname{tr}_gT=0$, and the semidefinite sign of $T$ gives $T=0$.
Thus \eqref{eq:ric-stationary} holds, and \eqref{eq:lie-nablaV} reduces
to
\begin{equation}\label{eq:nablaVA}
\nabla_VA
=
2(A^2-\lambda A)
=
2A(A-\lambda I).
\end{equation}

Since $R$ is constant, \eqref{eq:scalar-drift} gives
\[
\operatorname{tr}(A^2)
=
|\operatorname{Ric}|^2
=
\lambda R.
\]
Thus $\operatorname{tr}(A^2)$ is constant. Differentiating in the
$V$-direction and using \eqref{eq:nablaVA},
\[
0
=
2\operatorname{tr}(A\nabla_VA)
=
4\left(\operatorname{tr}(A^3)-\lambda\operatorname{tr}(A^2)\right).
\]
Hence
\begin{equation}\label{eq:p3}
\operatorname{tr}(A^3)=\lambda\operatorname{tr}(A^2).
\end{equation}
The right-hand side is constant, so $\operatorname{tr}(A^3)$ is
constant. Differentiating once more,
\[
0
=
3\operatorname{tr}(A^2\nabla_VA)
=
6\left(\operatorname{tr}(A^4)-\lambda\operatorname{tr}(A^3)\right),
\]
and therefore
\begin{equation}\label{eq:p4}
\operatorname{tr}(A^4)=\lambda\operatorname{tr}(A^3).
\end{equation}
Using \eqref{eq:p3} and \eqref{eq:p4},
\begin{align*}
|A(A-\lambda I)|^2
&=
\operatorname{tr}(A^4)
-2\lambda\operatorname{tr}(A^3)
+\lambda^2\operatorname{tr}(A^2)\\
&=0.
\end{align*}
Thus \eqref{eq:A-polynomial} holds, and \eqref{eq:nablaVA} gives
\begin{equation}\label{eq:nablaVA-zero}
\nabla_VA=0.
\end{equation}

Since $R$ is constant, \eqref{eq:gradR-general} gives
\begin{equation}\label{eq:AVzero}
A(V)=0.
\end{equation}
Differentiating \eqref{eq:AVzero} in the direction $X$ yields
\[
0=(\nabla_XA)(V)+A(\nabla_XV).
\]
By \eqref{eq:hess} and \eqref{eq:A-polynomial},
\[
A(\nabla_XV)=A(\lambda I-A)X=0.
\]
Hence
\begin{equation}\label{eq:nablaXAVzero}
(\nabla_XA)(V)=0.
\end{equation}

Set
\[
S=\nabla V=\lambda I-A.
\]
From the curvature convention,
\[
R(X,Y)V=(\nabla_XS)(Y)-(\nabla_YS)(X).
\]
Taking $Y=V$ and using \eqref{eq:nablaVA-zero} and
\eqref{eq:nablaXAVzero},
\begin{align*}
R(X,V)V
&=-(\nabla_XA)(V)+(\nabla_VA)(X)\\
&=0.
\end{align*}
Thus the soliton is radially flat. Since $R$ is constant, the rigidity
characterization of Petersen--Wylie
\cite[Theorem~1.2]{Petersen-Wylie2} implies that the soliton is rigid.
\end{proof}

\begin{remark}\label{rem:stationarity}
Every gradient Ricci soliton is self-similar under the Ricci flow: with $\tau(t)=1-2\lambda t$ and
$\sigma(t)=-(2\lambda)^{-1}\log\tau(t)$, the metrics
\[
g(t)=\tau(t)\,\Phi_{\sigma(t)}^*g,
\qquad \tau(t)>0,
\]
satisfy $\partial_tg=-2\operatorname{Ric}_{g(t)}$ and $g(0)=g$. Since the
covariant Ricci tensor is invariant under scaling,
$\operatorname{Ric}_{g(t)}=\Phi_{\sigma(t)}^*\operatorname{Ric}$: the
metric evolves by scalings and diffeomorphisms, the Ricci tensor by
diffeomorphisms alone. Differentiating
$\operatorname{Ric}_{g(t)}=\Phi_{\sigma(t)}^*\operatorname{Ric}$ at
$t=0$ and using Hamilton's equation
$\partial_t\operatorname{Ric}=\Delta_L\operatorname{Ric}$
\cite{Hamilton82} gives
\[
\mathcal L_{\nabla f}\operatorname{Ric}=\Delta_L\operatorname{Ric},
\]
where, in an orthonormal frame,
\[
\Delta_Lh=\Delta h+2\mathring R(h)-h(A\,\cdot,\cdot)-h(\cdot,A\,\cdot),
\qquad
\mathring R(h)(X,Y)=\sum_{k,l}h(e_k,e_l)\langle R(X,e_k)e_l,Y\rangle.
\]
The same identity follows from \eqref{eq:lie-nablaV} and
$\Delta_f\operatorname{Ric}=2\lambda\operatorname{Ric}
-2\mathring R(\operatorname{Ric})$ \cite[Proposition~2.1]{ELM}.
The hypothesis of Theorem~\ref{thm:riemannian-rigidity} thus says that, at each point
$x\in M$, the map $t\mapsto\operatorname{Ric}_{g(t)}(x)$ is monotone as
a family of quadratic forms on $T_xM$, and the theorem says that it is
then constant in time: $\operatorname{Ric}_{g(t)}=\operatorname{Ric}_g$
for every $t$. The flow is then linear,
\[
g(t)=g-2t\operatorname{Ric}.
\]
\end{remark}

\begin{proof}[Proof of Theorem~\ref{mainthm}]
By Theorem~\ref{thm:lie-ricci},
\[
\mathcal L_{\nabla f}\operatorname{Ric}=0.
\]
Theorem~\ref{thm:riemannian-rigidity} gives
\[
A(A-\lambda I)=0
\]
and the soliton is rigid. Hence the Ricci eigenvalues are $0$ and
$\lambda$.

Since $A$ commutes with $J$, both eigenspaces are $J$-invariant.
Therefore the rank of $A$ is even; write
\[
\operatorname{rank}A=2k,
\qquad 0\leq k\leq m.
\]
Then
\[
R=\operatorname{tr}A=2k\lambda.
\]
By rigidity, the universal cover splits isometrically as
\[
\widetilde M\cong N^{2k}\times\mathbb R^{2(m-k)},
\]
where $N$ is complete, simply connected, and Einstein with
\[
\operatorname{Ric}_{g_N}=\lambda g_N.
\]
The two factors are tangent to the $\lambda$- and $0$-eigenspaces of
$A$, respectively. These distributions are $J$-invariant and parallel,
so the parallel complex structure splits with the metric. Consequently,
$N$ is K\"ahler--Einstein and
\[
(\widetilde M,\widetilde g,\widetilde J)
\cong
N^{2k}\times\mathbb C^{m-k}
\]
holomorphically and isometrically. The general rigid description then
shows that $M$ is a quotient of this product by a discrete group of
holomorphic isometries acting freely and properly discontinuously.

Suppose now that $\lambda>0$. In the rigid description, $M$ is a flat
vector bundle over its zero section $\Sigma^{2k}$, and $T\Sigma$ is the
$\lambda$-eigenspace of $A$. Hence $\Sigma$ is a complete
K\"ahler--Einstein manifold satisfying
\[
\operatorname{Ric}_{\Sigma}=\lambda g_{\Sigma}>0.
\]
By Bonnet--Myers, $\Sigma$ is compact, and Kobayashi's theorem
\cite{Kobayashi1961} implies that $\Sigma$ is simply connected. A flat
vector bundle over a simply connected base has trivial holonomy and is
therefore trivial. Hence
\[
M^{2m}
\cong
\Sigma^{2k}\times\mathbb R^{2(m-k)}
\cong
\Sigma^{2k}\times\mathbb C^{m-k}
\]
holomorphically and isometrically. This proves the shrinking assertion
and completes the proof.
\end{proof}

\begin{remark}\label{rem:kahler-role}
Proposition~\ref{prop:kahler-lie} shows that, on a gradient
K\"ahler--Ricci soliton, the tensor
$\mathcal L_V\operatorname{Ric}$ is precisely the $J$-invariant part of
the Hessian of the scalar curvature. In particular, constant scalar
curvature forces $\mathcal L_V\operatorname{Ric}=0$.

The same mechanism appears along the Ricci flow. The metrics $g(t)$ of
Remark~\ref{rem:stationarity} are K\"ahler with respect to $J$, since
$\Phi_t$ preserves $J$. In holomorphic coordinates,
\[
\rho=-i\partial\bar\partial\log\det(g_{\alpha\bar\beta}),
\]
and
\[
\partial_t\log\det(g_{\alpha\bar\beta})=-R,
\]
so
\[
\partial_t\rho=i\partial\bar\partial R.
\]
Thus the Ricci form evolves only through the spatial variation of the
scalar curvature.

For comparison, on a general gradient Ricci soliton only the trace
identity \eqref{eq:lie-trace} remains. Suppose that $R$ is constant and
let
\[
T_V(X)=R(X,V)V
\]
be the radial Jacobi operator. Then
$\operatorname{Ric}(V,\cdot)=0$, and differentiating $A(V)=0$ together
with \eqref{eq:soliton-commutator} gives
\[
\nabla_VA=T_V-A(\lambda I-A).
\]
Using \eqref{eq:lie-nablaV}, we obtain
\[
(\mathcal L_V\operatorname{Ric})(X,Y)
=\bigl\langle\bigl(T_V+A(\lambda I-A)\bigr)X,Y\bigr\rangle.
\]
Hence, under constant scalar curvature,
\[
\mathcal L_V\operatorname{Ric}=0
\quad\Longleftrightarrow\quad
T_V=A^2-\lambda A.
\]
In the K\"ahler case the vanishing of
$\mathcal L_V\operatorname{Ric}$ follows directly from
Proposition~\ref{prop:kahler-lie}; in
Theorem~\ref{thm:riemannian-rigidity} it follows from the semidefinite
sign condition.
\end{remark}

\begin{corollary}\label{cor:psh}
Let $(M^{2m},g,f,J)$ be a complete nonsteady gradient K\"ahler--Ricci
soliton satisfying
\[
\operatorname{Ric}+\nabla^2f=\lambda g,
\qquad \lambda\neq0.
\]
If the scalar curvature is plurisubharmonic, or plurisuperharmonic,
then it is constant, and the conclusions of Theorem~\ref{mainthm} hold.
\end{corollary}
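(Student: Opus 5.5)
The plan is to reduce Corollary~\ref{cor:psh} to Theorem~\ref{thm:riemannian-rigidity} through the Kähler identity \eqref{eq:kahler-lie} of Proposition~\ref{prop:kahler-lie}. That identity gives
\[
(\mathcal L_V\operatorname{Ric})(X,X)=\tfrac12\bigl(\nabla^2R(X,X)+\nabla^2R(JX,JX)\bigr),
\qquad V=\nabla f .
\]
The right-hand side is, up to a positive constant, the Levi form of $R$ evaluated on the $(1,0)$-vector $X-\sqrt{-1}JX$. Indeed, on a Kähler manifold one has $\nabla^2u(X,X)+\nabla^2u(JX,JX)=\mathrm d\mathrm d^cu(X,JX)$, as already used in the proof of Proposition~\ref{prop:kahler-lie}. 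With $\mathrm d^c u = -\mathrm du\circ J$, the form $\mathrm d\mathrm d^cu$ equals $2\sqrt{-1}\,\partial\bar\partial u$.

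Hence plurisubharmonicity of $R$, in the smooth sense $\sqrt{-1}\partial\bar\partial R\geq0$, is equivalent to
\[
\nabla^2R(X,X)+\nabla^2R(JX,JX)\geq0 \qquad\text{for all } X,
\]
and therefore to $\mathcal L_{\nabla f}\operatorname{Ric}\geq0$ on $M$. Likewise, plurisuperharmonicity is equivalent to $\mathcal L_{\nabla f}\operatorname{Ric}\leq0$. The only thing I must check here is the sign convention: that $\mathrm d\mathrm d^cR(X,JX)$ is a positive multiple of $\sqrt{-1}\partial\bar\partial R(Z,\bar Z)$ for $Z=X-\sqrt{-1}JX$. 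Even if the convention flipped the sign, the conclusion would be unaffected, since the two hypotheses of the corollary would just be exchanged and Theorem~\ref{thm:riemannian-rigidity} accepts either sign.

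With the semidefinite sign in hand, I apply Theorem~\ref{thm:riemannian-rigidity}, which is legitimate because $M$ is complete and $\lambda\neq0$. It yields $\mathcal L_{\nabla f}\operatorname{Ric}=0$ and, crucially, that the scalar curvature $R$ is constant. This is the substantive step; all the analytic work (Lemma~\ref{lem:scalar-along-V} and the orbit/ODE argument) is already done there.

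Once $R$ is constant, the hypotheses of Theorem~\ref{mainthm} are satisfied verbatim: a complete connected nonsteady gradient Kähler--Ricci soliton with constant scalar curvature. So all of its conclusions follow:
\begin{itemize}
\item $R=2k\lambda$;
\item the universal cover splits holomorphically and isometrically as $N^{2k}\times\mathbb C^{m-k}$, with $N$ Kähler--Einstein;
\item the quotient is trivial in the shrinking case.
\end{itemize}

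I expect no genuine obstacle. The only point requiring care is the identification of the $J$-invariant part of $\nabla^2R$ with the complex Hessian, so that plurisubharmonicity translates exactly into the pointwise quadratic-form inequality on all real tangent vectors.
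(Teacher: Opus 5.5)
Your proposal is correct and is essentially the paper's own proof: the paper likewise uses $\frac12\mathrm d\mathrm d^cR=i\partial\bar\partial R$ together with \eqref{eq:kahler-lie} to get $(\mathcal L_{\nabla f}\operatorname{Ric})(X,X)=i\partial\bar\partial R(X,JX)$, so plurisubharmonicity or plurisuperharmonicity of $R$ gives the semidefinite sign. It then invokes Theorem~\ref{thm:riemannian-rigidity} for the constancy of $R$, and Theorem~\ref{mainthm} for the remaining conclusions; your sign convention $\mathrm d\mathrm d^cu=2\sqrt{-1}\,\partial\bar\partial u$ agrees with the paper's.
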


\begin{proof}
Since $\frac12\mathrm{d}\mathrm{d}^cR=i\partial\bar\partial R$,
\eqref{eq:kahler-lie} gives
\[
(\mathcal L_{\nabla f}\operatorname{Ric})(X,X)=i\partial\bar\partial R(X,JX).
\]
Thus $\mathcal L_{\nabla f}\operatorname{Ric}\geq0$ if $R$ is
plurisubharmonic, and $\mathcal L_{\nabla f}\operatorname{Ric}\leq0$ if
$R$ is plurisuperharmonic. Theorem~\ref{thm:riemannian-rigidity} shows
that $R$ is constant, and Theorem~\ref{mainthm} applies.
\end{proof}

\section{Rigidity under Ricci pinching}

We now prove Theorem~\ref{thm:pinching-main}. The proof is divided into
two parts. First we show directly that the pinching forces constant
scalar curvature and the algebraic relation $A^2=\lambda A$. We then
derive radial flatness from this algebraic relation.

\begin{lemma}\label{lem:pinching-constant}
Let $(M^n,g,f)$ be a complete gradient shrinking Ricci soliton with
\[
\operatorname{Ric}+\nabla^2f=\lambda g,
\qquad \lambda>0.
\]
If
\[
0\leq A\leq\lambda I,
\]
then $R$ is constant and
\begin{equation}\label{eq:A-projection-pinching}
A^2=\lambda A.
\end{equation}
\end{lemma}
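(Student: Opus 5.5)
The plan is a weighted cutoff argument based on the drift equation \eqref{eq:scalar-drift} and the finite weighted volume \eqref{eq:finite-weighted-volume}. The pinching $0\le A\le\lambda I$ gives, eigenvalue by eigenvalue, $\mu_i(\lambda-\mu_i)\ge0$. Summing,
\[
\lambda R-|\operatorname{Ric}|^2=\operatorname{tr}\bigl(A(\lambda I-A)\bigr)\ge0 .
\]
Hence \eqref{eq:scalar-drift} says
\[
\Delta_fR=2\operatorname{tr}\bigl(A(\lambda I-A)\bigr)\ge0,
\]
so $R$ is $f$-subharmonic. The pinching also gives $0\le R\le n\lambda$, so $R$ is bounded. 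Moreover $|\nabla R|=2|A V|\le 2\lambda|\nabla f|$ by \eqref{eq:gradR-general}.

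I will show that a bounded $f$-subharmonic function on a complete shrinker with finite weighted volume is constant, and that the defect $\int \Delta_fR\,e^{-f}$ vanishes. Take a standard cutoff $\phi_r$, equal to $1$ on $B_r$, supported in $B_{2r}$, with $|\nabla\phi_r|\le C/r$. Multiply $\Delta_fR\ge0$ by $\phi_r^2R\,e^{-f}$ (or by $\phi_r^2(R-c)$ for a suitable constant) and integrate by parts using $\operatorname{div}(e^{-f}\nabla\cdot)=e^{-f}\Delta_f$. This gives
\[
\int\phi_r^2|\nabla R|^2e^{-f}+\int\phi_r^2R\,\Delta_fR\,e^{-f}\ \le\ 2\int\phi_r|R||\nabla\phi_r||\nabla R|e^{-f}.
\]
Since $R\ge0$ and $\Delta_fR\ge0$, the second term on the left is nonnegative. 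Cauchy--Schwarz absorbs half of the gradient term. What remains is bounded by
\[
C\sup R^2\int|\nabla\phi_r|^2e^{-f}\le C r^{-2}\int_Me^{-f}\to0 .
\]
Hence $\nabla R\equiv0$. The compact case is the same with $\phi\equiv1$.

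As an alternative, or in addition, integrate $\Delta_fR\,\phi_r e^{-f}$ directly. The boundary term $\int\langle\nabla R,\nabla\phi_r\rangle e^{-f}$ is controlled by $\frac{C}{r}\int|\nabla f|e^{-f}$. This is finite by the Cao--Zhou estimate $|\nabla f|^2\le f$ together with the weighted volume bound, so it tends to $0$. Thus
\[
\int\operatorname{tr}\bigl(A(\lambda I-A)\bigr)e^{-f}=0 .
\]

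With $R$ constant, $\Delta_fR=0$, so $\operatorname{tr}(A(\lambda I-A))=0$ pointwise. This is a sum of nonnegative terms $\mu_i(\lambda-\mu_i)$, so each $\mu_i\in\{0,\lambda\}$. Since $A$ is self-adjoint, this means $A^2=\lambda A$, which is \eqref{eq:A-projection-pinching}.

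The main obstacle is justifying the integration by parts at infinity in the noncompact case. The first (energy) version avoids needing $|\nabla f|$ integrability, because $R$ is bounded and only $\int|\nabla\phi_r|^2e^{-f}$ enters. I would therefore use that version, which relies only on \eqref{eq:finite-weighted-volume}.
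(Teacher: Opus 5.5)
Your proposal is correct and follows essentially the same argument as the paper. You multiply $\Delta_f R = 2\operatorname{tr}(A(\lambda I - A))\ge 0$ by $\phi_r^2 R\,e^{-f}$, integrate by parts, and absorb via Young's inequality to get $\int \phi_r^2|\nabla R|^2 e^{-f}\le 4\int R^2|\nabla\phi_r|^2 e^{-f}\le C r^{-2}\int_M e^{-f}$, then conclude $A(\lambda I - A)=0$ from the vanishing trace of a nonnegative self-adjoint endomorphism. Your alternative Cao--Zhou boundary-term route is unnecessary, and its estimate $|\nabla f|^2\le f$ presupposes the normalization $\lambda=\tfrac12$ and a fixed additive constant in $f$, so relying on the energy version, as you do, is the right choice.
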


\begin{proof}
By \eqref{eq:scalar-drift},
\begin{equation}\label{eq:scalar-pinching}
\Delta_fR
=
2\operatorname{tr}\bigl(A(\lambda I-A)\bigr).
\end{equation}
The endomorphism $A(\lambda I-A)$ is nonnegative and self-adjoint, so
\[
\Delta_fR\geq0.
\]
Moreover,
\[
0\leq R=\operatorname{tr}A\leq n\lambda.
\]

Let $\phi_r$ be a standard cutoff satisfying
\[
0\leq\phi_r\leq1,
\qquad
\phi_r\equiv1\ \text{on }B_r,
\qquad
\operatorname{supp}\phi_r\subset B_{2r},
\qquad
|\nabla\phi_r|\leq\frac{C}{r}.
\]
Since $R\geq0$ and $\Delta_fR\geq0$,
\[
0
\leq
\int_M\phi_r^2R\,\Delta_fR\,e^{-f}\,\mathrm{d}v.
\]
Weighted integration by parts gives
\begin{align*}
0
&\leq
-\int_M
\left\langle\nabla(\phi_r^2R),\nabla R\right\rangle e^{-f}\,\mathrm{d}v\\
&=
-\int_M\phi_r^2|\nabla R|^2e^{-f}\,\mathrm{d}v
-
2\int_M\phi_rR\langle\nabla\phi_r,\nabla R\rangle e^{-f}\,\mathrm{d}v.
\end{align*}
Therefore, by Young's inequality,
\[
\int_M\phi_r^2|\nabla R|^2e^{-f}\,\mathrm{d}v
\leq
4\int_MR^2|\nabla\phi_r|^2e^{-f}\,\mathrm{d}v.
\]
Using $R\leq n\lambda$, \eqref{eq:finite-weighted-volume}, and the
bound on $|\nabla\phi_r|$,
\[
\int_{B_r}|\nabla R|^2e^{-f}\,\mathrm{d}v
\leq
\frac{C}{r^2}\int_Me^{-f}\,\mathrm{d}v.
\]
Letting $r\to\infty$ gives
\[
\nabla R=0.
\]
Thus $R$ is constant. Returning to \eqref{eq:scalar-pinching},
\[
0=\operatorname{tr}\bigl(A(\lambda I-A)\bigr).
\]
Since $A(\lambda I-A)$ is nonnegative and self-adjoint,
\[
A(\lambda I-A)=0,
\]
which is equivalent to \eqref{eq:A-projection-pinching}.
\end{proof}

The following lemma isolates the partial Codazzi structure used to prove
radial flatness.

\begin{lemma}\label{lem:partial-codazzi}
Let $(M^n,g,f)$ be a connected gradient shrinking Ricci soliton with
$\lambda>0$. Assume
\begin{equation}\label{eq:A-projection}
A^2=\lambda A.
\end{equation}
Let
\[
E_0=\ker A,
\qquad
E_\lambda=\ker(A-\lambda I).
\]
Then
\begin{equation}\label{eq:partial-codazzi}
(\nabla_XA)Y=(\nabla_YA)X
\end{equation}
whenever $X,Y$ both belong to $E_0$ or both belong to $E_\lambda$.
\end{lemma}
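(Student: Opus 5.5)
The plan is to turn the partial Codazzi identity into integrability of the two eigendistributions, and then obtain integrability pointwise from the Ricci Bochner identity \eqref{eq:bochner-ricci}. This combines a Walczak-type formula for the mixed scalar curvature of $E_0\oplus E_\lambda$ with the vanishing of the mean curvatures forced by \eqref{eq:divA}.

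\emph{Step 1: algebraic consequences of \eqref{eq:A-projection}.} The eigenvalues of $A$ lie in $\{0,\lambda\}$, so $R=\operatorname{tr}A=\lambda\operatorname{rank}A$. Since $R$ is continuous and $\operatorname{rank}A$ is integer-valued, $\operatorname{rank}A$ is constant on the connected manifold $M$. Hence $R$ is constant, $E_0$ and $E_\lambda$ are smooth complementary orthogonal distributions, and \eqref{eq:gradR-general} gives $A(V)=0$, that is, $V\in E_0$.

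Differentiating $A^2=\lambda A$ gives $(\nabla_ZA)A+A(\nabla_ZA)=\lambda\nabla_ZA$. Applied to $Y\in E_b$, this shows $(\nabla_ZA)Y\in E_{\lambda-b}$, so each $\nabla_ZA$ interchanges $E_0$ and $E_\lambda$.

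\emph{Step 2: reduction to integrability.} Extend $X,Y$ to local sections of the same eigendistribution. If $X,Y\in E_0$, then $(\nabla_XA)Y=-A\nabla_XY$, so
\[
(\nabla_XA)Y-(\nabla_YA)X=-A[X,Y].
\]
If $X,Y\in E_\lambda$, then $(\nabla_XA)Y=(\lambda I-A)\nabla_XY$, so
\[
(\nabla_XA)Y-(\nabla_YA)X=(\lambda I-A)[X,Y].
\]
Thus \eqref{eq:partial-codazzi} on $E_0$, respectively on $E_\lambda$, is equivalent to the integrability of that distribution, i.e.\ to the vanishing of its integrability tensor $T_0$, respectively $T_\lambda$. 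Similarly, $\nabla A=\lambda\nabla P$ with $P=A/\lambda$ the projection onto $E_\lambda$. Therefore $|\nabla A|^2$ equals, up to fixed positive constants, $|h_0|^2+|T_0|^2+|h_\lambda|^2+|T_\lambda|^2$, where $h_0,h_\lambda$ are the second fundamental forms (the symmetric parts).

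\emph{Step 3: mean curvatures vanish.} Compute $\operatorname{div}A$ in an adapted frame. The $E_0$ directions contribute $-\lambda H_0$, which lies in $E_\lambda$. The $E_\lambda$ directions contribute $\lambda H_\lambda$, which lies in $E_0$. Since $R$ is constant, \eqref{eq:divA} gives $\operatorname{div}A=0$. The two contributions lie in complementary subspaces, so $H_0=H_\lambda=0$.

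\emph{Step 4: Bochner versus mixed curvature.} Because $|\operatorname{Ric}|^2=\lambda R$ is constant, \eqref{eq:bochner-ricci} becomes the pointwise identity
\[
|\nabla A|^2=2Q(A)-2\lambda|\operatorname{Ric}|^2 .
\]
By \eqref{eq:QA}, $Q(A)=\lambda^2\sum_{i,j\in E_\lambda}K_{ij}$. Using $\operatorname{Ric}(e_i,e_i)=\lambda$ for $e_i\in E_\lambda$, the right-hand side reduces to $-2\lambda^2\sum_{i\in E_\lambda,\,j\in E_0}K_{ij}$, i.e.\ to $-2\lambda^2$ times the mixed scalar curvature.

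Walczak's formula expresses the mixed scalar curvature as
\[
|H_0|^2+|H_\lambda|^2-|h_0|^2-|h_\lambda|^2+|T_0|^2+|T_\lambda|^2+\operatorname{div}(H_0+H_\lambda).
\]
By Step 3 the mean-curvature and divergence terms drop out pointwise. Substituting, the Bochner identity becomes
\[
c_1\bigl(|h_0|^2+|T_0|^2+|h_\lambda|^2+|T_\lambda|^2\bigr)
=2\lambda^2\bigl(|h_0|^2+|h_\lambda|^2-|T_0|^2-|T_\lambda|^2\bigr)
\]
for some constant $c_1>0$. If the constants match, so that the $h$-terms cancel, this leaves a positive multiple of $|T_0|^2+|T_\lambda|^2$ equal to zero. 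Then $T_0=T_\lambda=0$, and Step 2 gives \eqref{eq:partial-codazzi}.

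\emph{Main obstacle.} The main risk is in Step 4: getting the normalizations in $|\nabla P|^2$ and in Walczak's formula to match exactly, so that the second-fundamental-form terms cancel and the integrability terms survive with a definite sign. If the constants do not align, I would instead combine \eqref{eq:soliton-commutator}, which identifies the skew parts with $-R(X,Y)V$, with $V\in E_0$ and with the mixed curvatures, to obtain an additional relation.
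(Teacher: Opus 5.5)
Your proposal is correct, and the normalization you flagged as the main risk does work out. Use Walczak's convention $T(X,Y)=\tfrac12[X,Y]^\perp$. For $Y\in E_\lambda$ and $W\in E_0$ one has $\langle(\nabla_ZP)Y,W\rangle=\langle\nabla_ZY,W\rangle=-\langle Y,\nabla_ZW\rangle$. Since $\nabla_ZP$ is symmetric and off-diagonal, its two blocks contribute equally. The directions $Z\in E_\lambda$ give $|h_\lambda|^2+|T_\lambda|^2$, and $Z\in E_0$ give $|h_0|^2+|T_0|^2$, because the symmetric and skew parts are orthogonal. Hence
\[
|\nabla A|^2=\lambda^2|\nabla P|^2=2\lambda^2\bigl(|h_0|^2+|T_0|^2+|h_\lambda|^2+|T_\lambda|^2\bigr).
\]
So $c_1=2\lambda^2$, the $h$-terms cancel exactly, and you are left with $4\lambda^2\bigl(|T_0|^2+|T_\lambda|^2\bigr)=0$. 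The fallback via \eqref{eq:soliton-commutator} is not needed. You should say explicitly that you use Walczak's identity pointwise rather than integrated. This is legitimate because $H_0$ and $H_\lambda$ vanish identically, and it means no completeness is required, matching the lemma's hypotheses.

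The paper reaches the same conclusion by a direct index computation instead of citing Walczak, and the two proofs are really the same computation in different language.
\begin{itemize}
\item The paper's field $W=\sum_i(\nabla_{e_i}A)(Ae_i)$ equals $\lambda^2H_\lambda$, and $\operatorname{div}A=\lambda(H_\lambda-H_0)$. So its two vanishing statements are exactly your Step 3.
\item Its identity $\sum a_{ijk}a_{jik}=Q(A)-\operatorname{tr}(A^3)$ comes from $\operatorname{div}W$, the derivative of $\operatorname{div}A$, and the Ricci commutation formula. This is Walczak's formula with $H_0=H_\lambda=0$, since $\sum a_{ijk}a_{jik}=\lambda^2\bigl(|h_0|^2-|T_0|^2+|h_\lambda|^2-|T_\lambda|^2\bigr)$ and $Q(A)-\operatorname{tr}(A^3)=-\lambda^2\sum_{i\in E_\lambda,\,j\in E_0}K_{ij}$.
\item Its Cauchy--Schwarz equality case is your cancellation of the $h$-terms.
\end{itemize}
Your packaging buys a transparent geometric meaning. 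The partial Codazzi symmetry is exactly integrability of $E_0$ and $E_\lambda$, and minimality of the leaves comes for free. The price is importing an external formula whose sign and normalization conventions must be matched against $|\nabla A|^2$, which is precisely where your write-up left a conditional. The paper's version is self-contained and avoids that bookkeeping.
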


\begin{proof}
Since $A/\lambda$ is a smooth self-adjoint projection, its rank is
constant on the connected manifold $M$. Hence
\[
R=\operatorname{tr}A
\]
is constant and, by \eqref{eq:divA},
\begin{equation}\label{eq:divA-zero}
\operatorname{div}A=0.
\end{equation}

Fix $p\in M$. Choose an orthonormal eigenbasis
$\{e_i\}_{i=1}^n$ of $A$ at $p$ and extend it to an orthonormal frame
normal at $p$. Write
\[
Ae_i=\mu_i e_i,
\qquad
\mu_i\in\{0,\lambda\},
\]
and set
\[
a_{ijk}
=
\left\langle(\nabla_{e_i}A)e_j,e_k\right\rangle.
\]
Since $A$ is self-adjoint,
\begin{equation}\label{eq:a-last-sym}
a_{ijk}=a_{ikj}.
\end{equation}

Differentiating \eqref{eq:A-projection} in the direction $e_i$ gives
\[
(\nabla_{e_i}A)A
+
A(\nabla_{e_i}A)
=
\lambda\nabla_{e_i}A.
\]
Taking the $(e_j,e_k)$ component,
\begin{equation}\label{eq:selection}
(\mu_j+\mu_k-\lambda)a_{ijk}=0.
\end{equation}
Consequently,
\begin{equation}\label{eq:same-eigen-zero}
a_{ijk}=0
\qquad\text{if}\qquad
\mu_j=\mu_k.
\end{equation}

Because of \eqref{eq:divA-zero} and \eqref{eq:A-projection},
\[
0
=
\operatorname{div}(A^2)
=
\sum_i(\nabla_{e_i}A)(Ae_i)+A(\operatorname{div}A).
\]
Thus the vector field
\[
W=\sum_i(\nabla_{e_i}A)(Ae_i)
\]
vanishes. Taking its divergence at $p$ gives
\begin{equation}\label{eq:divW}
0
=
\sum_{i,j}
\left\langle(\nabla^2_{e_j,e_i}A)e_j,Ae_i\right\rangle
+
\sum_{i,j,k}a_{ijk}a_{jik},
\end{equation}
where
\[
\nabla^2_{X,Y}A
=
\nabla_X(\nabla_YA)-\nabla_{\nabla_XY}A.
\]
Differentiating \eqref{eq:divA-zero} and contracting with $Ae_i$ gives
\begin{equation}\label{eq:divdivA}
0
=
\sum_{i,j}
\left\langle(\nabla^2_{e_i,e_j}A)e_j,Ae_i\right\rangle.
\end{equation}
Subtracting \eqref{eq:divW} from \eqref{eq:divdivA}, and using the
Ricci commutation formula for endomorphisms,
\[
(\nabla^2_{X,Y}-\nabla^2_{Y,X})A=[R(X,Y),A],
\]
we obtain
\begin{align}
\sum_{i,j,k}a_{ijk}a_{jik}
&=
\sum_{i,j}
\left\langle[R(e_i,e_j),A]e_j,Ae_i\right\rangle
\nonumber\\
&=
\sum_{i,j}\mu_i(\mu_j-\mu_i)K_{ij}
\nonumber\\
&=
Q(A)-\operatorname{tr}(A^3).
\label{eq:a-curvature}
\end{align}
Since \eqref{eq:A-projection} gives
\[
\operatorname{tr}(A^3)
=
\lambda\operatorname{tr}(A^2)
=
\lambda|\operatorname{Ric}|^2,
\]
and $|\operatorname{Ric}|^2=\lambda R$ is constant, the Bochner formula
\eqref{eq:bochner-ricci} yields
\[
|\nabla\operatorname{Ric}|^2
=
2\bigl(Q(A)-\lambda|\operatorname{Ric}|^2\bigr).
\]
Together with \eqref{eq:a-curvature},
\begin{equation}\label{eq:key-norm}
|\nabla\operatorname{Ric}|^2
=
2\sum_{i,j,k}a_{ijk}a_{jik}.
\end{equation}

If $a_{ijk}a_{jik}\neq0$, then \eqref{eq:selection} implies
\[
\mu_j\neq\mu_k,
\qquad
\mu_i\neq\mu_k.
\]
Since the only eigenvalues are $0$ and $\lambda$, we must have
$\mu_i=\mu_j$. Hence \eqref{eq:key-norm} becomes
\begin{equation}\label{eq:key-norm-restricted}
\frac12|\nabla\operatorname{Ric}|^2
=
\sum_{\mu_i=\mu_j}a_{ijk}a_{jik}.
\end{equation}

By \eqref{eq:same-eigen-zero}, every nonzero $a_{ijk}$ satisfies
$\mu_j\neq\mu_k$. Using \eqref{eq:a-last-sym}, the terms
$a_{ijk}$ and $a_{ikj}$ occur in equal pairs, and exactly one member of
each pair satisfies $\mu_i=\mu_j$. Therefore
\begin{equation}\label{eq:half-norm}
\sum_{\mu_i=\mu_j}a_{ijk}^2
=
\frac12|\nabla\operatorname{Ric}|^2.
\end{equation}
Applying Cauchy--Schwarz to \eqref{eq:key-norm-restricted},
\begin{align*}
\frac12|\nabla\operatorname{Ric}|^2
&=
\sum_{\mu_i=\mu_j}a_{ijk}a_{jik}\\
&\leq
\frac12\sum_{\mu_i=\mu_j}
\bigl(a_{ijk}^2+a_{jik}^2\bigr)\\
&=
\sum_{\mu_i=\mu_j}a_{ijk}^2\\
&=
\frac12|\nabla\operatorname{Ric}|^2.
\end{align*}
Equality holds throughout, so
\[
a_{ijk}=a_{jik}
\qquad\text{whenever}\qquad
\mu_i=\mu_j.
\]
This is exactly \eqref{eq:partial-codazzi}.
\end{proof}

\begin{remark}\label{rem:codazzi-connection}
The symmetry in Lemma~\ref{lem:partial-codazzi} is weaker than the full
Codazzi condition, since it is asserted only when the two input vectors
belong to the same eigendistribution of $A$. Nevertheless, it is closely
related to the rigidity mechanism for Codazzi operators studied by
Cheng, Lipa and Zhou \cite{Cheng-Lipa-Zhou}, where scalar curvature and
trace invariants are used to control the eigenvalue structure of a
Codazzi symmetric tensor. Here the combination of $A^2=\lambda A$ with
the Ricci Bochner identity produces exactly the partial symmetry needed
for radial flatness.
\end{remark}

\begin{proposition}\label{prop:projection-radial}
Let $(M^n,g,f)$ be a complete connected gradient shrinking Ricci soliton
with $\lambda>0$. If
\[
A^2=\lambda A,
\]
then
\[
R(X,\nabla f)\nabla f=0
\]
for every vector field $X$. Hence the soliton is radially flat and,
by \cite[Theorem~1.2]{Petersen-Wylie2}, rigid.
\end{proposition}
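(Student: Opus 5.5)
The plan is to reduce radial flatness to the single identity $\nabla_VA=0$, where $V=\nabla f$, and then obtain that identity from Lemma~\ref{lem:partial-codazzi} together with the selection rule \eqref{eq:selection}.

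\emph{Step 1: $R$ is constant.} As in the proof of Lemma~\ref{lem:partial-codazzi}, $A/\lambda$ is a smooth self-adjoint projection, so its rank is constant on the connected manifold $M$. Hence $R=\operatorname{tr}A$ is constant. Then \eqref{eq:gradR-general} gives $A(V)=0$, that is, $V\in E_0$ at every point.

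\emph{Step 2: $(\nabla_XA)V=0$ for every $X$.} Differentiating $A(V)=0$ in the direction $X$ and using \eqref{eq:hess} gives
\[
0=(\nabla_XA)V+A(\lambda I-A)X .
\]
The second term vanishes by the hypothesis $A^2=\lambda A$. So $(\nabla_XA)V=0$, exactly as in \eqref{eq:nablaXAVzero}.

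\emph{Step 3: reduction of the radial curvature.} Setting $Y=V$ in \eqref{eq:soliton-commutator} and using Step 2,
\[
R(X,V)V=(\nabla_VA)X-(\nabla_XA)V=(\nabla_VA)X .
\]
So it suffices to prove $\nabla_VA=0$. We check this pointwise on $T_pM=E_0\oplus E_\lambda$, treating the two summands separately.

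\emph{Step 4: $(\nabla_VA)X=0$ for $X\in E_0$.} Since $V$ and $X$ both lie in $E_0$, Lemma~\ref{lem:partial-codazzi} gives
\[
(\nabla_VA)X=(\nabla_XA)V,
\]
and the right-hand side is $0$ by Step 2.

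\emph{Step 5: $(\nabla_VA)X=0$ for $X\in E_\lambda$.} We test against an eigenbasis, using that $\nabla_VA$ is self-adjoint (this is \eqref{eq:a-last-sym}).
\begin{itemize}
\item For $Z\in E_0$, self-adjointness and Step 4 give $\langle(\nabla_VA)X,Z\rangle=\langle(\nabla_VA)Z,X\rangle=0$.
\item For $Z\in E_\lambda$, the vectors $X$ and $Z$ have the same eigenvalue. The selection rule \eqref{eq:same-eigen-zero}, applied with the first slot in direction $V$ (by linearity in that slot), gives $\langle(\nabla_VA)X,Z\rangle=0$.
\end{itemize}
Hence $(\nabla_VA)X=0$.

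\emph{Conclusion.} Steps 4 and 5 give $\nabla_VA=0$, so Step 3 yields $R(X,\nabla f)\nabla f=0$ for all $X$. Since $R$ is constant, \cite[Theorem~1.2]{Petersen-Wylie2} gives rigidity.

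The only step that is not a direct computation is Step 4, the $E_0$ component. There the partial Codazzi symmetry is essential, because it is the only available way to trade the derivative $\nabla_VA$ for $\nabla_XA$ applied to $V$. Once that is in place, the $E_\lambda$ component follows purely algebraically from self-adjointness and the eigenvalue selection rule.
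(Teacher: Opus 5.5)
Your proposal is correct and follows the paper's proof essentially step for step: the same three ingredients (constancy of $R$ giving $(\nabla_XA)V=0$, the partial Codazzi symmetry killing the $E_0$ part, and the selection rule plus self-adjointness killing the $E_\lambda$ part). The only cosmetic difference is that you show $\nabla_VA=0$ using self-adjointness of $\nabla_VA$, whereas the paper argues with the self-adjoint Jacobi operator $T_V$, which coincides with $\nabla_VA$ by the identity $R(X,V)V=(\nabla_VA)X$ that you also derive.
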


\begin{proof}
As observed in the proof of Lemma~\ref{lem:partial-codazzi},
$R=\operatorname{tr}A$ is constant. Hence
\[
A(V)=0
\]
by \eqref{eq:gradR-general}; in particular, $V\in E_0$.

Differentiating $A(V)=0$ gives
\[
0=(\nabla_XA)V+A(\nabla_XV).
\]
Using \eqref{eq:hess} and $A(\lambda I-A)=0$,
\begin{equation}\label{eq:nablaA-V-zero}
(\nabla_XA)V=0
\end{equation}
for every $X$.

If $F\in E_0$, then both $F$ and $V$ belong to the same eigenspace.
Lemma~\ref{lem:partial-codazzi} and \eqref{eq:soliton-commutator} give
\[
R(F,V)V=0.
\]
Thus the Jacobi operator
\[
T_V(X)=R(X,V)V
\]
vanishes on $E_0$.

On the other hand, taking $Y=V$ in \eqref{eq:soliton-commutator} and
using \eqref{eq:nablaA-V-zero},
\begin{equation}\label{eq:TV-nablaVA}
T_V(X)=(\nabla_VA)X.
\end{equation}
Differentiating $A^2=\lambda A$ in the direction $V$ shows that
$\nabla_VA$ has only mixed components between $E_0$ and $E_\lambda$.
Hence, if $E\in E_\lambda$,
\[
T_V(E)\in E_0.
\]
The Jacobi operator $T_V$ is self-adjoint. Therefore, for $F\in E_0$,
\[
\langle T_VE,F\rangle
=
\langle E,T_VF\rangle
=
0.
\]
Since $T_VE\in E_0$, this forces $T_VE=0$. Thus $T_V=0$ on both
$E_0$ and $E_\lambda$, and consequently
\[
R(X,V)V=0
\]
for every $X$. At critical points of $f$ the same conclusion is
automatic. Hence the soliton is radially flat.

Since the scalar curvature is constant, the rigidity characterization
of Petersen--Wylie \cite[Theorem~1.2]{Petersen-Wylie2} implies rigidity.
\end{proof}

\begin{proof}[Proof of Theorem~\ref{thm:pinching-main}]
Lemma~\ref{lem:pinching-constant} gives
\[
R=\text{constant}
\qquad\text{and}\qquad
A^2=\lambda A.
\]
Proposition~\ref{prop:projection-radial} then gives
\[
R(X,\nabla f)\nabla f=0
\]
for every $X$, and hence the soliton is rigid.
\end{proof}

\begin{remark}\label{rem:PW-independent}
Theorem~\ref{thm:pinching-main} provides a direct proof under the
pinching
\[
0\leq\operatorname{Ric}\leq\lambda g.
\]
The scalar curvature is first shown to be constant by a weighted cutoff
argument. The relation $A^2=\lambda A$ then determines the block
structure of $\nabla A$, while Lemma~\ref{lem:partial-codazzi} supplies
the additional symmetry needed to obtain radial flatness. Thus the
argument is independent of Proposition~1.3(2) of Petersen--Wylie
\cite{Petersen-Wylie2}.
\end{remark}

\begin{remark}\label{rem:Cheng-Zhou}
Proposition~\ref{prop:projection-radial} also gives an alternative proof
for the final rigidity step in the four-dimensional theorem of
Cheng--Zhou \cite{Cheng-Zhou}. Precisely, in their normalized setting,
\[
\operatorname{Ric}+\nabla^2f=\frac12g,
\qquad
R=1.
\]
Once their argument yields the rank-two spectrum
\[
0,\quad0,\quad\frac12,\quad\frac12,
\]
one has
\[
A^2=\frac12A.
\]
Proposition~\ref{prop:projection-radial} then gives radial flatness and
rigidity directly.
\end{remark}

\section*{Acknowledgments}
M. Hor\'acio thanks the Departamento de Matem\'atica of the
Universidade de Bras\'ilia, where part of this work was carried out, and
Jo\~ao Paulo dos Santos for his guidance and support.

\bibliographystyle{amsplain}

\end{document}